\documentclass[a4paper]{article}
\usepackage{amsthm,amsmath,amssymb}
\usepackage[OT1]{fontenc}
\usepackage{graphicx}
\usepackage[utf8]{inputenc}
\usepackage{enumitem}

\usepackage{tabularx,booktabs}

\usepackage{authblk}
\usepackage[numbers,sort]{natbib}

\usepackage[usenames,dvipsnames]{xcolor}

\newtheorem{theorem}{Theorem}
\newtheorem{lemma}[theorem]{Lemma}
\newtheorem{proposition}[theorem]{Proposition}


\newtheorem{corollary}[theorem]{Corollary}

\newtheorem*{claim}{Claim}

\theoremstyle{remark}
\newtheorem*{remark}{Remark}

\newenvironment{claimproof}{%
\noindent%
\textit{Proof of Claim.}%
}
{
\hfill $\triangle$%
\medskip
}

\let\leq\leqslant
\let\geq\geqslant
\let\setminus\smallsetminus

\newcommand{\cA}{\mathcal{A}}

\newcommand{\cF}{\mathcal{F}}
\newcommand{\cC}{\mathcal{C}}
\newcommand{\TD}{\mathcal{TD}}

\newcommand{\bN}{\mathbb{N}}

\newcommand{\tw}{\operatorname{tw}}
\newcommand{\td}{\operatorname{td}}
\newcommand{\dis}{\operatorname{dis}}

\makeatletter
\let\old@setaddresses\@setaddresses
\def\@setaddresses{\bgroup\parindent 0pt\let\scshape\relax\old@setaddresses\egroup}
\makeatother

\setlength{\abovecaptionskip}{2pt}
\setlength{\belowcaptionskip}{0.5\baselineskip}
\setlength{\floatsep}{0.5\baselineskip}
\setlength{\textwidth}{14 cm}
\setlength{\textfloatsep}{0.5\baselineskip}
\setlength{\intextsep}{0.5\baselineskip}

\DeclareGraphicsExtensions{.pdf,.png,.eps}
\graphicspath{{figs/}}

\begin{document}

\title{The $k$-strong induced arboricity of a graph}

\author{Maria Axenovich, Daniel Gon\c{c}alves, Jonathan Rollin, Torsten Ueckerdt}

\maketitle
 
\begin{abstract}
 The induced arboricity of a graph $G$ is the smallest number of induced forests covering the edges of $G$.
 This is a well-defined parameter bounded from above by the number of edges of $G$ when each forest in a cover consists of exactly one edge.
 Not all edges of a graph necessarily belong to induced forests with larger components. 
 For $k\geq 1$, we call an edge $k$-valid if it is contained in an induced tree on $k$ edges.
 The $k$-strong induced arboricity of $G$, denoted by $f_k(G)$, is the smallest number of induced forests with components of sizes at least $k$ that cover all $k$-valid edges in $G$. 
 This parameter is highly non-monotone.
 
 However, we prove that for any proper minor-closed graph class $\cC$, and more generally for any class of bounded expansion, and any $k \geq 1$, the maximum value of $f_k(G)$ for $G \in \cC$ is bounded from above by a constant depending only on $\cC$ and $k$.
 This implies that the adjacent closed vertex-distinguishing number of graphs from a class of bounded expansion is bounded by a constant depending only on the class.
 We further prove that $f_2(G) \leq 3\binom{t+1}{3}$ for any graph $G$ of tree-width~$t$ and that $f_k(G) \leq (2k)^d$ for any graph of tree-depth $d$. 
 In addition, we prove that $f_2(G) \leq 310$ when $G$ is planar.
\end{abstract}
%

\section{Introduction}\label{sec:introduction}

Let $G = (V,E)$ be a simple, finite and undirected graph.
An \emph{induced forest} in $G$ is an acyclic induced subgraph of $G$.
A \emph{cover} of $X\subseteq V\cup E$, is a set of subgraphs of $G$ whose union contains every element of $X$.
It is certainly one of the most classical problems in graph theory to cover the vertex set $V$ or the edge set $E$ of $G$ with as few as possible subgraphs from a specific class, such as independent sets~\cite{West}, stars~\cite{AK85}, paths~\cite{AEH81}, forests~\cite{Nas64}, planar graphs~\cite{MOS98}, interval graphs~\cite{GW95}, or graphs of tree-width~$t$~\cite{DOS00}, just to name a few.
Extensive research on graph covers has been devoted to the following two graph parameters:
The \emph{vertex arboricity} (also called point-arboricity) of $G$, denoted by $a(G)$, is the minimum $t$ such that~$V$ can be covered with $t$ induced forests~\cite{CKW68}.
Note that $V$ can always be covered using a single (not necessarily induced) forest.
The \emph{edge arboricity} of $G$, denoted as $a'(G)$, is the minimum~$t$ such that $E$ can be covered with $t$ forests~\cite{Nas64}.
Burr~\cite{Bur86} proved that for any graph $G$ we have $a(G) \leq a'(G)$.
Nash-Williams~\cite{Nas64} proved that the edge arboricity of a graph $G$ is given by $a'(G) = \max\{\lceil|E(H)|/(|V(H)|-1)\rceil\}$ where the maximum is taken over all subgraphs $H$ of $G$ with at least two vertices.
Thus the problem of covering the edge set $E$ with forests is completely answered as it depends only the maximum density among subgraphs of $G$.
However, if forests are required to be induced, the graph's structure plays a more important role.

Here, we define the {\bf \emph{induced arboricity}} $f_1(G)$ of $G$ to be the minimum $t$ such that the edge set $E$ of $G$ can be covered with $t$ induced forests.
This means that not only all components of such a forest $F$ are induced trees, but that $F$ is an induced subgraph of $G$, i.e., also no edge of $G$ connects two trees in $F$.
The induced arboricity $f_1(G)$ is a natural arboricity-parameter.
While for $a(G)$ one covers the vertices of $G$ with induced forests, and for $a'(G)$ one covers the edges of $G$ with general forests, for $f_1(G)$ one covers the edges of $G$ with induced forests, which to the best of our knowledge has not been considered before.
As it is trivial to cover the vertices with a single (not necessarily induced) forest, our study completes all four cases of covering the vertices or edges with induced or general forests.

Recent results on vertex-distinguishing numbers show connections to edge coverings with induced forests in which each component has at least two edges.
In~\cite{AHP16} it is shown that the adjacent closed vertex-distinguishing number of $G$, denoted by $\dis[G]$, is bounded from above by a function of the smallest number $m$ of induced forests with components on at least two edges covering the edges of $G$.
We provide the precise definition of this variant of distinguishing numbers in Section~\ref{sec:preliminaries}.
Moreover, it is asked in~\cite{AHP16} whether there exists a constant $m$ such that for any planar graph such a cover exists.
Here we answer this question in the affirmative by proving that $m = 310$ is enough (c.f.\ Theorem~\ref{thm:acyclic}).

Motivated by this connection to vertex-distinguishing numbers of graphs, we study the more general problem of covering the edges of $G$ with induced forests in which each component has at least $k$ edges.
More precisely, for $k \geq 1$, let a \emph{$k$-strong forest} of $G$ be an induced forest in $G$, each of whose connected components consists of at least $k$ edges.
An edge $e \in E(G)$ is defined to be \emph{$k$-valid}, $k \geq 1$, if there exists a $k$-strong forest in $G$ containing~$e$.
We define the {\bf \emph{$k$-strong induced arboricity}} of $G$, denoted by $f_k(G)$, as the smallest number of $k$-strong forests covering all $k$-valid edges of $G$.

\begin{figure}[tb]
 \centering
 \includegraphics{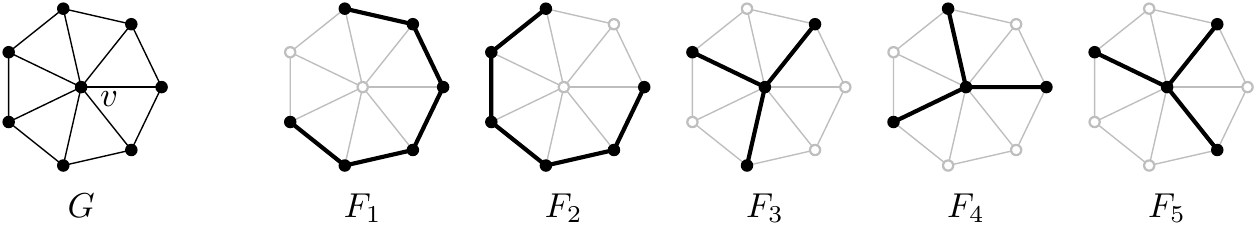}
 \caption{A graph $G$ and five induced forests $F_1,\ldots,F_5$ in $G$.
  Edges incident to $v$ are $3$-valid but not $4$-valid and the remaining edges are $5$-valid but not $6$-valid.
  }
 \label{fig:small-examples}
\end{figure}

For example, a $1$-strong forest is a forest that is induced and has no isolated vertices, and a $2$-strong forest is one that is induced and has neither $K_1$- nor $K_2$-components.
Here a $K_1$-component, respectively $K_2$-component, of a forest $F$ is a connected component of~$F$ with exactly one vertex, respectively exactly two vertices.
Note that induced arboricity of $G$ and $1$-strong induced arboricity of $G$ coincide, justifying the notation $f_1(G)$, because isolated vertices in a forest do not help to cover the edges of $G$ and hence these can be easily omitted.
Thus for the induced arboricity it suffices to consider induced forests where every component has at least one edge, that is, $1$-strong forests.
However, note that for $k \geq 2$ we possibly can not cover $E(G)$ with $k$-strong forests, for example when $G$ is a clique or when $|E(G)| < k$.
In fact, only the $k$-valid edges of $G$ can be covered with $k$-strong forests, where of course, every edge is $1$-valid.
By removing a leaf in an induced tree, one obtains an induced tree with exactly one edge less.
Thus an edge $e$ is $k$-valid if and only if it belongs to an induced tree with exactly $k$ edges.
We call such a tree a \emph{witness tree} for $e$.

We illustrate the new concepts in Figure~\ref{fig:small-examples} using the wheel graph $G$ with a $7$-cycle $C$ and one central vertex $v$ connected to all vertices in $C$.
Note that any induced forest containing $v$ contains no edge of $C$, that the cycle $C$ can not be covered with only one forest, and that the edges incident to $v$ can not be covered with only two induced forests.
It follows that~$f_k(G) \geq 5$ for $k=1,2,3$ and $f_k(G) \geq 2$ for $k=4,5$, where both inequalities are tight as certified by the covers $\{F_1,\ldots,F_5\}$, respectively $\{F_1,F_2\}$, of the $k$-valid edges with $k$-strong forests given in Figure~\ref{fig:small-examples}.

We discuss other variants of the $k$-strong induced arboricity (where for example \emph{all} edges of the graph have to be covered) in our concluding remarks in Section~\ref{sec:Conclusions}.

\paragraph{Our results:}
The main result of this paper shows that for every graph class $\cC$ of bounded expansion (a notion of sparsity recently introduced by Ne{\v{s}}et{\v{r}}il and Ossona de Mendez~\cite{NO12}) and every natural number $k$, the parameter $f_k$ for graphs in $\cC$ is bounded from above by a constant independent of the order of the graph.
We provide the formal definitions of the following concepts in Section~\ref{sec:preliminaries}, where we also discuss their immediate relations to each other: graph classes of bounded expansion, nowhere dense classes of graphs, as well as tree-width, tree-depth, and adjacent closed vertex-distinguishing number of a graph.


\begin{theorem}\label{thm:main}
 Let $\cC$ be a class of graphs that is of bounded expansion.
 Then for each positive integer $k$ there is a constant $c_k = c(k,\cC)$ such that for each $G \in \cC$ we have $f_k(G) \leq c_k$.
\end{theorem}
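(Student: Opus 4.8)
The plan is to exploit the structural characterisation of bounded expansion classes via \emph{low tree-depth colourings}: for every class $\cC$ of bounded expansion and every integer $p$ there is a constant $N = N(p,\cC)$ such that every $G \in \cC$ admits a vertex colouring with $N$ colours in which the union of any $i \le p$ colour classes induces a subgraph of tree-depth at most $i$ (Ne\v{s}et\v{r}il--Ossona de Mendez). I would apply this with $p$ chosen large enough in terms of $k$ — concretely $p = k+1$ suffices, since a $k$-valid edge lies in an induced tree on $k+1$ vertices — to obtain such a colouring $\varphi \colon V(G) \to [N]$. The point of introducing it is that every $k$-valid edge, together with a witness tree for it, is ``captured'' by the union of at most $k+1$ colour classes, and such a union induces a graph of bounded tree-depth, a setting in which $f_k$ is controlled by a constant depending only on $k$ and the tree-depth (this is exactly the statement $f_k(G) \le (2k)^d$ for tree-depth $d$, which the excerpt promises and which I may assume). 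So the whole difficulty is reduced to a counting/combining argument over colour classes.

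Here are the steps in order. First, fix the low tree-depth colouring $\varphi$ with $N = N(k+1,\cC)$ colours and set $d = k+1$, so that the union of any $d$ colour classes induces a graph of tree-depth at most $d$. Second, for each $(k+1)$-subset $S \subseteq [N]$ let $G_S = G[\varphi^{-1}(S)]$; then $\td(G_S) \le k+1$, hence $f_k(G_S) \le (2k)^{k+1}$ by the tree-depth bound. Third — and this is the step requiring care — observe that a $k$-strong forest \emph{inside} $G_S$ need not be an induced forest inside $G$, because $G$ may have extra edges between $\varphi^{-1}(S)$-vertices that lie in $V(G)\setminus\varphi^{-1}(S)$... no: $G_S$ is the induced subgraph on $\varphi^{-1}(S)$, so any induced subgraph of $G_S$ is an induced subgraph of $G$ as well. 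Thus a $k$-strong forest of $G_S$ \emph{is} a $k$-strong forest of $G$. Fourth, every $k$-valid edge $e$ of $G$ has a witness tree $T_e$ on $k$ edges (so on $k+1$ vertices); the vertex set $V(T_e)$ uses at most $k+1$ colours, so it is contained in $\varphi^{-1}(S)$ for some $(k+1)$-set $S$, and $T_e$ — being an induced tree of $G$ on a subset of $\varphi^{-1}(S)$ — witnesses that $e$ is $k$-valid \emph{in $G_S$}. Hence $\{k\text{-valid edges of }G\} \subseteq \bigcup_{S} \{k\text{-valid edges of }G_S\}$. Fifth, cover the $k$-valid edges of each $G_S$ by at most $(2k)^{k+1}$ many $k$-strong forests (valid in $G$ too), and take the union over all $\binom{N}{k+1}$ choices of $S$. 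This yields a cover of all $k$-valid edges of $G$ by at most $c_k := \binom{N}{k+1}(2k)^{k+1}$ many $k$-strong forests, a constant depending only on $k$ and $\cC$.

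The main obstacle I anticipate is purely a matter of bookkeeping rather than of genuine mathematical content: one must make sure the tree-depth bound $f_k(H) \le (2k)^{|H|\text{'s tree-depth}}$ is applied to graphs of tree-depth at most $k+1$ and not exactly $k+1$ — this is fine since $(2k)^i \le (2k)^{k+1}$ for $i \le k+1$ — and one must double-check the direction of the inclusion of $k$-valid edges, namely that $k$-validity in $G_S$ implies $k$-validity in $G$ (immediate, since induced subgraphs of $G_S$ are induced in $G$) and that every $k$-valid edge of $G$ becomes $k$-valid in \emph{some} $G_S$ (the witness-tree argument above). A secondary point worth stating explicitly is the choice $p = k+1$: one needs $p$ at least the number of vertices of a witness tree, which is $k+1$, and taking $p$ exactly $k+1$ keeps the constant as small as this method allows. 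If one instead only knows that $\cC$ is nowhere dense rather than of bounded expansion, the same low tree-depth colouring (now with $N$ also depending mildly on $|V(G)|$) breaks the argument, which is consistent with the theorem being stated for bounded expansion; I would remark on this but not pursue it.
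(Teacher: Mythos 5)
Your proposal is correct and follows essentially the same route as the paper: a $(k+1)$-tree-depth colouring (whose existence with a bounded number of colours characterises bounded expansion), the observation that every witness tree on $k+1$ vertices lands inside the union of some $k+1$ colour classes, and the bound $f_k(H)\leq (2k)^{k+1}$ for graphs of tree-depth at most $k+1$ from Theorem~\ref{thm:tree-depth}, yielding $c_k=\binom{N}{k+1}(2k)^{k+1}$. The only (immaterial) difference is that the paper treats $k=1$ separately via the acyclic chromatic number to get the slightly better constant $\binom{b_2}{2}$.
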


\begin{remark}
 Graph classes of bounded expansion include for example all proper minor-closed families and thus also the class of all planar graphs.
 Recall that a graph class $\cC$ is called \emph{minor-closed} if for each $G\in\cC$ any graph $H$ obtained from $G$ by deleting edges or vertices, or by contracting edges is contained in $\cC$.
\end{remark}



As mentioned above, Theorem~\ref{thm:main} answers an open question about vertex-distinguishing numbers of planar graphs~\cite{AHP16}, c.f.\ Corollary~\ref{cor:bounded-expansion-distinguishing}, which follows immediately from Theorem~\ref{thm:main} and the following proposition.

\begin{proposition}[Axenovich~\textit{et al.}~\cite{AHP16}]\label{prop:2-strong-to-distinguishing}
 Let $G$ be a graph whose edges are covered by $m$ $2$-strong forests.
 Let $p_1,\ldots,p_m$ be pairwise relatively prime integers, each at least~$4$.
 Then $\dis[G] \leq p_1p_2 \cdots p_m$.
\end{proposition}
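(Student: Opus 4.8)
The plan is to produce the desired vertex colouring one prime at a time and then to glue the pieces together via the Chinese Remainder Theorem. Recall that $\dis[G]$ is defined through vertex colourings $c\colon V(G)\to\{1,\dots,k\}$ for which every edge $uv$ satisfies $\sigma_c[u]\ne\sigma_c[v]$, where $\sigma_c[v]=\sum_{w\in N[v]}c(w)$ is the closed sum at $v$. Let $F_1,\dots,F_m$ be the $2$-strong forests covering $E(G)$ and put $N=p_1\cdots p_m$. For each $i$ I will construct a labelling $c_i\colon V(G)\to\mathbb{Z}_{p_i}$ which is $0$ on $V(G)\setminus V(F_i)$ and which resolves every edge of $F_i$, i.e.\ $\sigma_{c_i}[u]\ne\sigma_{c_i}[v]$ in $\mathbb{Z}_{p_i}$ for every edge $uv\in F_i$. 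Granting this, define $c\colon V(G)\to\{1,\dots,N\}$ by taking $c(v)$ to be the unique representative in $\{1,\dots,N\}$ of the residue class that is $\equiv c_i(v)\pmod{p_i}$ for every $i$; this is well defined because the $p_i$ are pairwise coprime. Since $p_i\mid N$ we get $\sigma_c[v]\equiv\sigma_{c_i}[v]\pmod{p_i}$ for all $v$ and $i$. Finally, given any edge $uv$ of $G$, the cover property gives $uv\in F_j$ for some $j$, and then $\sigma_c[u]\not\equiv\sigma_c[v]\pmod{p_j}$, so $\sigma_c[u]\ne\sigma_c[v]$. Hence $c$ witnesses $\dis[G]\le N$, as claimed.

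It remains to build $c_i$, and this is where being a $2$-strong forest — induced and with all components of size at least two — is used. First, because $F_i$ is an \emph{induced} subgraph of $G$, a vertex $u\in V(F_i)$ has $N_G[u]\cap V(F_i)=N_{F_i}[u]$; together with $c_i\equiv0$ off $V(F_i)$ this means that for $u\in V(F_i)$ the closed sum $\sigma_{c_i}[u]$ computed in $G$ agrees with the closed sum of $u$ computed inside the forest $F_i$. So the whole task reduces to the following statement about a single forest $F$ in which every component has at least two edges: for every $p\ge4$ there is $\phi\colon V(F)\to\mathbb{Z}_p$ with $\sum_{w\in N_F[x]}\phi(w)\ne\sum_{w\in N_F[y]}\phi(w)$ for every edge $xy$ of $F$. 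Treating components separately, it suffices to prove this for a single tree $T$ with at least two edges; note that the hypothesis on component size is exactly what rules out the hopeless case $T=K_2$, for which the two closed sums coincide whatever $\phi$ is.

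For the tree $T$ the clean reformulation is that the vector $\tau(x):=\sum_{w\in N_T[x]}\phi(w)$ should be a proper colouring of $T$; equivalently, writing $A$ for the adjacency matrix of $T$, the image of the linear map $\phi\mapsto(I+A)\phi$ over $\mathbb{Z}_p$ must contain a proper colouring of $T$. I would either argue this directly and inductively — root $T$ at a leaf, expose the vertices from the leaves upwards, and at each step pick the new $\phi$-value avoiding the boundedly many residues that would leave a just-completed edge unresolved — or analyse the image of $I+A$, whose cokernel for a tree is small and explicitly described by a balance condition across the bipartition, and then exhibit a near-constant proper colouring (constant on each side of the bipartition, perturbed at two vertices to meet the balance condition) lying inside it. In either approach the genuinely delicate point, and the main obstacle, is controlling what happens at a high-degree vertex of $T$, where many constraints are imposed simultaneously; the slack afforded by $p\ge4$ (rather than, say, $p\ge3$) is what keeps a legal choice available and lets the bookkeeping go through. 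Once this single-forest fact is in hand, the Chinese-Remainder assembly above is routine.
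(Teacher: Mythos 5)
Your outer scaffolding is fine: extending each labelling by zero outside $V(F_i)$, using that $F_i$ is induced to identify closed sums in $G$ with closed sums inside $F_i$, and gluing the $m$ labellings with the Chinese Remainder Theorem is correct, and it also silently handles the ``unless $N[u]=N[v]$'' clause in the definition of $\dis[G]$, since a graph whose edges are all covered by $2$-strong forests has no such edges. (For calibration: the paper does not prove this proposition at all --- it imports it from~\cite{AHP16} --- so the only thing to assess is whether your argument is complete on its own.)

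It is not, and the gap is exactly where you flag it. The entire content of the statement is the single-forest lemma: every tree with at least two edges admits $\phi\colon V(T)\to\mathbb{Z}_p$, $p\geq 4$, whose closed sums differ across every edge. You offer two strategies and carry out neither. The leaves-up greedy fails as described: the constraint for an edge $uv$ involves only the labels on $\bigl(N(u)\cup N(v)\bigr)\setminus\{u,v\}$, so when you expose a vertex $t$ last among its descendants, $t$ is the final undetermined variable for \emph{every} edge joining a child of $t$ to a grandchild of $t$; that is as many forbidden residues as $t$ has grandchildren, which is unbounded, so ``boundedly many residues'' is false and the slack from $p\geq 4$ cannot absorb it. The linear-algebra route is likewise unsubstantiated: you need a proper colouring of $T$ in the image of $I+A$ over $\mathbb{Z}_p$, but $\det(I+A)\bmod p$ can vanish or not depending on $T$ and $p$ (it is $0$ for $K_2$, $-2$ for $K_{1,3}$, etc.), and the claimed description of the cokernel by a ``balance condition across the bipartition'' is not justified, nor is it shown that your near-constant candidate colouring lies in the image in the degenerate cases. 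Until one of these routes is actually executed --- in particular with an argument that works at vertices of high degree and explains where $p\geq4$ (as opposed to $p\geq3$) and the minimum component size enter --- the proposal reduces the proposition to an unproven claim that is precisely the theorem of~\cite{AHP16} being cited.
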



\begin{corollary}\label{cor:bounded-expansion-distinguishing}
 For every graph class $\cC$ of bounded expansion there is an absolute constant $c$ such that for any $G \in \cC$, the adjacent closed vertex-distinguishing number $\dis[G] \leq c$.
 In particular there exists such a constant $c$ for the class of all planar graphs.
\end{corollary}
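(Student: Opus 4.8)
The plan is to obtain Corollary~\ref{cor:bounded-expansion-distinguishing} as a direct combination of Theorem~\ref{thm:main} (specialised to $k=2$) with Proposition~\ref{prop:2-strong-to-distinguishing}. Applying Theorem~\ref{thm:main} with $k=2$ to the class $\cC$ gives a constant $c_2 = c(2,\cC)$, depending only on $\cC$, such that $f_2(G) \le c_2$ for every $G \in \cC$; that is, the $2$-valid edges of each $G \in \cC$ can be covered by at most $c_2$ induced forests all of whose components have at least two edges.

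The single point that needs attention is that Proposition~\ref{prop:2-strong-to-distinguishing} is phrased for a cover of \emph{all} edges of $G$ by $2$-strong forests, while $f_2$, and hence Theorem~\ref{thm:main}, only controls a cover of the $2$-valid edges. I would reconcile this by noting that an edge $uv \in E(G)$ fails to be $2$-valid exactly when $u$ and $v$ are true twins, i.e.\ $N[u] = N[v]$: such an edge $uv$ lies in an induced path on two edges if and only if some vertex is adjacent to exactly one of $u,v$, which fails precisely when $N(u) \subseteq N[v]$ and $N(v) \subseteq N[u]$. A pair of true twins cannot be separated by any symmetric vertex invariant, so the condition defining the adjacent closed vertex-distinguishing number only restricts adjacent pairs that are \emph{not} true twins, i.e.\ only the $2$-valid edges. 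Hence a cover of just the $2$-valid edges by $2$-strong forests is already the hypothesis the proof of Proposition~\ref{prop:2-strong-to-distinguishing} uses; I would verify this against the precise definition of $\dis[G]$ given in Section~\ref{sec:preliminaries}.

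It then remains to fix the constant. Let $p_1 < p_2 < \dots < p_{c_2}$ be pairwise relatively prime integers each at least $4$ — for concreteness, the $c_2$ smallest primes that are at least $5$ — and put $c := p_1 p_2 \cdots p_{c_2}$. For $G \in \cC$ the $2$-valid edges are covered by some $m \le c_2$ $2$-strong forests, so Proposition~\ref{prop:2-strong-to-distinguishing} applied with $p_1,\dots,p_m$ yields $\dis[G] \le p_1 \cdots p_m \le p_1 \cdots p_{c_2} = c$, using $p_i \ge 4 > 1$. Since $c$ depends only on $c_2$, and $c_2$ only on $\cC$, the first statement follows. For planar graphs one may additionally take $c_2 = 310$ by Theorem~\ref{thm:acyclic}, giving an explicit absolute constant $c$.

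I do not anticipate a real obstacle: the corollary is essentially a translation between the two quoted results. The only thing that could conceivably go wrong is the compatibility discussed in the second paragraph; a reassuring sanity check is that if covering the $2$-valid edges did not suffice then $\dis[G]$ would be infinite already for the planar graph $K_4$ minus an edge, contradicting the corollary — so the definition of $\dis[G]$ must indeed ignore non-$2$-valid edges.
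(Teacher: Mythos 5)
Your proposal is correct and follows exactly the route the paper intends: the corollary is stated as an immediate consequence of Theorem~\ref{thm:main} with $k=2$ combined with Proposition~\ref{prop:2-strong-to-distinguishing}, with the constant obtained from a fixed choice of pairwise coprime integers. Your extra care about edges that are not $2$-valid is precisely the paper's own observation in Section~\ref{sec:preliminaries} that $N[u]=N[v]$ if and only if $uv$ is not $2$-valid, so these edges are exempt from the distinguishing condition and covering only the $2$-valid edges suffices.
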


For $k \in \bN$, let us call a class $\cC$ of graphs \emph{$f_k$-bounded}, if there is a constant $c_k=c(k,\cC)$ such that $f_k(G)\leq c_k$ for each $G\in \cC$, and let us say that $\cC$ is \emph{$f$-bounded} if $\cC$ is $f_k$-bounded for each $k \in \bN$.
Then Theorem~\ref{thm:main} states that each class of bounded expansion is $f$-bounded.
This result however can not be generalized to nowhere dense classes of graphs (another notion of sparsity that includes for example all classes of bounded expansion).
We show in Theorem~\ref{thm:monotonicity}\ref{enumi:nowhere-dense-not-f-bounded} that there exist nowhere dense classes of graphs that are not $f$-bounded, in fact not $f_k$-bounded for any $k \in \bN$.
Indeed, nowhere dense classes of graphs and $f$-bounded classes of graphs are two different extensions of classes of graphs of bounded expansion.
We show in Theorem~\ref{thm:monotonicity}\ref{enumi:f_k-notNowhereDense} that there exist $f$-bounded classes of graphs that are not nowhere dense (and hence not of bounded expansion).
We show in Theorem~\ref{thm:monotonicity}\ref{enumi:monotonicity} that the $k$-strong induced arboricity is in general a highly non-monotone parameter.
We also show in Theorem~\ref{thm:monotonicity}\ref{enumi:1-strong-from-chi},\ref{enumi:monotonicity} some relations between the parameters $k$-strong induced arboricity $f_k(G)$, tree-width $\tw(G)$, tree-depth $\td(G)$, edge arboricity $a'(G)$, and acyclic chromatic number $\chi_{\rm acyc}(G)$.
Recall that the acyclic chromatic number of a graph $G$ is the smallest number of colors in a proper coloring of $G$ in which any two color classes induce a forest. 

\begin{theorem}\label{thm:monotonicity}
 \begin{enumerate}[label = (\roman*)]
 \item For each graph $G$ we have $\log_3(\chi_{\rm acyc}(G)) \leq f_1(G) \leq \binom{\chi_{\rm acyc}(G)}{2}$.\label{enumi:1-strong-from-chi}
 
 \item For any integers $k,n\geq 1$, and for each item below there is a graph $G$ satisfying the listed conditions:\label{enumi:monotonicity}
 
\begin{enumerate}[ref=(\roman{enumi}.\alph*)]
\item $a'(G) = 2$ and $f_k(G) \geq n$,\label{enumii:a-vs-f_1}
\item $f_k(G) \leq 3$ and $f_{k+1}(G) \geq n$,\label{enumii:f_k-smaller-f_k+1}
\item $f_k(G) \geq n$ and $f_{k+1}(G) = 0$,\label{enumii:f_k-larger-f_k+1}
\item $G$ has an induced subgraph $H$ such that $f_k(G) =3$ and $f_k(H) \geq k$,\label{enumii:f_k-subgraphs}
\item $\tw(G) =2$ and $f_k(G) \geq k$,\label{enumii:tw-vs-f_k}
\item $\td(G) =3$ and $f_k(G) \geq k-1$.\label{enumii:td-vs-f_k}
\end{enumerate}

\item There is a class $\cC$ of graphs that is nowhere dense, but not $f_k$-bounded for any $k \geq 1$.\label{enumi:nowhere-dense-not-f-bounded}

\item There is a class $\cC$ of graphs that is $f_k$-bounded for every $k \geq 1$, but not nowhere dense.\label{enumi:f_k-notNowhereDense}

\end{enumerate}
\end{theorem}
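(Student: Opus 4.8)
\textbf{Proof plan for Theorem~\ref{thm:monotonicity}.}

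\emph{Part \ref{enumi:1-strong-from-chi}.} For the upper bound $f_1(G)\leq\binom{\chi_{\rm acyc}(G)}{2}$, take an acyclic colouring with colour classes $V_1,\dots,V_q$ where $q=\chi_{\rm acyc}(G)$. For each pair $\{i,j\}$ the subgraph induced on $V_i\cup V_j$ is a forest by definition, and these $\binom q2$ induced forests cover every edge since each edge has its two endpoints in two distinct classes. Every component has at least one edge after discarding isolated vertices, so this is a valid $1$-strong cover. For the lower bound $\log_3(\chi_{\rm acyc}(G))\leq f_1(G)$, suppose $f_1(G)=m$ with cover $F_1,\dots,F_m$. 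Since each $F_i$ is an induced forest, it is $2$-colourable, and I can colour $G$ by assigning to each vertex the vector of its colours in $F_1,\dots,F_m$ — but a vertex may be isolated in some $F_i$, giving a third state, so each coordinate has $3$ possibilities and we get a proper colouring with $3^m$ colours; checking that any two colour classes induce a forest requires noting that two classes differing in coordinate $i$ lie in $F_i$, hence induce a forest. Thus $\chi_{\rm acyc}(G)\leq 3^m$, i.e.\ $\log_3(\chi_{\rm acyc}(G))\leq f_1(G)$.

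\emph{Part \ref{enumi:monotonicity}.} Each item is a construction. For \ref{enumii:a-vs-f_1} I would take a large subdivision-like or bipartite incidence structure that is a forest-union of two forests yet forces many induced forests — a long ladder or a suitable bipartite graph where every induced tree on $k$ edges is small and edges are "spread out"; concretely, subdividing each edge of a large clique enough times keeps edge arboricity at $2$ (it becomes a subgraph of two forests) while the original clique edges force $f_k$ to grow. For \ref{enumii:f_k-smaller-f_k+1} and \ref{enumii:f_k-larger-f_k+1}, use the wheel-type examples from Figure~\ref{fig:small-examples} scaled up: a structure where $k$-valid edges are easy to cover but $(k{+}1)$-valid edges are forced into many classes, and, for \ref{enumii:f_k-larger-f_k+1}, one where \emph{no} edge is $(k{+}1)$-valid (e.g.\ a graph of diameter or induced-path length exactly tuned so induced trees cap at $k$ edges) while $f_k$ is large — disjoint copies of a small gadget with $f_k\geq 1$ won't suffice, so I would instead take one gadget (like a book of triangles or a subdivided clique) where induced trees are bounded in size. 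For \ref{enumii:f_k-subgraphs} I need a graph $G$ with small $f_k$ containing an induced $H$ with large $f_k$: attach extra vertices to $H$ that create large induced trees through $G$ so that few $k$-strong forests of $G$ suffice, while inside $H$ the same edges are forced apart. For \ref{enumii:tw-vs-f_k} a series-parallel (tree-width $2$) graph such as a large "fan" or stacked triangles, and for \ref{enumii:td-vs-f_k} a star of paths (tree-depth $3$), with the counting argument that each $k$-strong forest can only use $O(1/k)$ fraction of the critical edges.

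\emph{Parts \ref{enumi:nowhere-dense-not-f-bounded} and \ref{enumi:f_k-notNowhereDense}.} For \ref{enumi:nowhere-dense-not-f-bounded}, I would exhibit a nowhere dense class — a good candidate is the class of long subdivisions of all graphs, or $t(n)$-subdivisions of $K_n$ with $t(n)$ growing slowly — which is nowhere dense because long subdivisions kill dense shallow minors, yet has unbounded $f_k$ because the "branch vertices" retain clique-like covering obstructions for induced trees of fixed size $k$ (the subdivision paths must be longer than $k$ so that witness trees cannot shortcut). For \ref{enumi:f_k-notNowhereDense}, I need a class that is somewhere dense (contains arbitrarily large clique subdivisions of bounded length, or even all graphs as bounded-depth minors) but still $f_k$-bounded for every $k$: the natural choice is a class where every graph has bounded diameter in a strong sense so that \emph{no} edge is $k$-valid for $k$ larger than some absolute constant, making $f_k(G)=0$ eventually, combined with small $f_k$ for the few small $k$ — for instance the class of complete graphs $\{K_n : n\in\bN\}$ has $f_k(K_n)=0$ for all $k\geq 2$ and $f_1(K_n)=\binom n2$, which is \emph{not} bounded, so instead I would use complete multipartite graphs with parts of size $2$, or cocktail-party graphs, where induced forests are severely limited; alternatively the class of graphs obtained from cliques by a fixed bounded-size modification. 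The cleanest route is: take graphs of bounded "shrub-depth" or a class where for each $k$ the $k$-valid edges form a structure of bounded complexity.

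\textbf{Main obstacle.} The genuinely delicate points are \ref{enumi:f_k-notNowhereDense} — finding a somewhere-dense class that is nonetheless $f$-bounded requires an example where either almost no edge is $k$-valid or the $k$-valid edges are always coverable by few induced forests despite density — and verifying the lower-bound side of \ref{enumi:1-strong-from-chi}, where one must handle isolated vertices carefully (the factor $3$ rather than $2$) and confirm that the colouring so produced really is acyclic, i.e.\ that the union of any two colour classes is acyclic, which hinges on the observation that two classes differing in coordinate $i$ are both contained in $F_i\cup(\text{isolated vertices of }F_i)$, an induced forest.
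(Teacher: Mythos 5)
Your part \ref{enumi:1-strong-from-chi} is correct and is essentially the paper's argument (pairs of colour classes for the upper bound, the ternary product colouring for the lower bound). The rest has genuine gaps. In part \ref{enumi:monotonicity} no construction is actually verified: for \ref{enumii:a-vs-f_1} and \ref{enumii:f_k-smaller-f_k+1} your assertion that ``the original clique edges force $f_k$ to grow'' is exactly what needs proof, and the missing idea is a Ramsey-type argument: given $N$ induced forests covering the subdivided clique, colour each edge $e$ of $K_t$ by the forest(s) containing its subdivision edges; a monochromatic triangle would force either a cycle inside one forest or an edge of $G$ between two of its components, so $K_t$ receives a triangle-free edge colouring with $O(N^2)$ colours, and Ramsey's theorem then makes $N$ unbounded. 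For \ref{enumii:f_k-smaller-f_k+1} one also needs the matching upper bound $f_k(G)\leq 3$, which the paper obtains by subdividing each edge twice, attaching $k-1$ pendant edges at one subdivision vertex, and orienting $K_t$ with all in- and out-degrees at least $k$ so that the outer subdivision edges split into two induced star forests; nothing in your sketch produces this. Items \ref{enumii:f_k-larger-f_k+1}--\ref{enumii:td-vs-f_k} are gestured at but not constructed (the paper uses a clique with a pendant path, a ``saw'' graph of tree-width~$2$, and a small tree-depth-$3$ gadget, each with a short explicit verification).

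The proposals for \ref{enumi:nowhere-dense-not-f-bounded} and \ref{enumi:f_k-notNowhereDense} would fail. Long subdivisions of cliques do \emph{not} have unbounded $f_k$: in the $s$-subdivision with $s\geq 3$, the edges incident to branch vertices form a single induced star forest (branch vertices are pairwise non-adjacent, and no subdivision vertex adjacent to one branch vertex is adjacent to another), while the interior edges of the subdivision paths form an induced linear forest; hence $f_1\leq 2$, and once $s-1\geq k$ and $n-1\geq k$ also $f_k\leq 2$. Subdividing destroys, rather than preserves, the ``clique-like covering obstruction at branch vertices'' you invoke. The paper instead takes graphs of girth and chromatic number at least $n$: girth gives nowhere density, $\chi_{\rm acyc}\geq\chi\geq n$ together with part \ref{enumi:1-strong-from-chi} gives unbounded $f_1$, and girth at least $k+2$ makes every edge $k$-valid, so $f_k\geq f_1$. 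For \ref{enumi:f_k-notNowhereDense}, cocktail-party graphs are the opposite of what is needed: any four vertices of $K_{n\times 2}$ induce a cycle, so every induced forest has at most two edges, giving $f_1(K_{n\times 2})=\Theta(n^2)$, and since every edge is $2$-valid also $f_2(K_{n\times 2})=\Theta(n^2)$; likewise cliques with a bounded modification, or any bounded shrub-depth class, contain large cliques as induced subgraphs and so are not even $f_1$-bounded. The paper's example is the $1$-subdivision of $K_{n,n}$: deleting $n-1$ branch vertices of one side (respectively the other) yields two maximum induced trees that together cover all edges, so $f_k\leq 2$ for every $k$, while $K_n$ is a $2$-shallow minor of this graph, so the class is somewhere dense. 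This is the genuinely delicate point you flagged, and your candidates do not resolve it.
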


We remark that Theorem~\ref{thm:monotonicity}\ref{enumi:1-strong-from-chi} and Theorem~\ref{thm:monotonicity}\ref{enumii:f_k-smaller-f_k+1} together imply that in general $f_k(G)$ can not be bounded from above by a function of $\chi_{\rm acyc}(G)$, that is, Theorem~\ref{thm:monotonicity}\ref{enumi:1-strong-from-chi} can not be extended to $f_k(G)$ for any $k \geq 2$.

\medskip

Theorem~\ref{thm:main} provides the existence of constants bounding $f_k(G)$ for graphs $G$ from any class of bounded expansion.
Next, we give more specific bounds on these constants for special classes. 
Clearly, if $\tw(G) \leq 1$, then $f_k(G) \leq 1$ for every $k$.
However, already for graphs $G$ of tree-width~$2$ finding the largest possible value of $f_k(G)$ for $k \geq 2$ is non-trivial.
We show in particular that $f_1(G)\leq\binom{\tw(G)+1}{2}$ for any graph $G$, which is best-possible, since~$\tw(K_{t+1})=t$ and $f_1(K_{t+1})=\binom{t+1}{2}$, and that $f_2(G) \leq 3\binom{\tw(G)+1}{3}$ for any graph $G$ with $\tw(G) \geq 2$, which is best-possible when $\tw(G)=2$, as certified by $G$ being $K_3$ with a pendant edge at each vertex.

\begin{theorem}\label{thm:tw3}
 For every graph $G$ of tree-width $t\geq 2$, we have that $f_1(G) \leq \binom{t+1}{2}$ and $f_2(G)\leq 3\binom{t+1}{3}$.
\end{theorem}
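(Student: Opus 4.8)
The plan is to work with a tree decomposition $(T,\{B_x\}_{x\in V(T)})$ of $G$ of width $t$ and to use it to partition the edges of $G$ into a bounded number of classes, each of which can be completed to a single $k$-strong forest for $k\in\{1,2\}$. First I would recall the standard facts about tree decompositions of width $t$: one may assume $T$ is rooted, every bag has exactly $t+1$ vertices, and adjacent bags differ in exactly one vertex (a \emph{nice} or \emph{smooth} decomposition). For each vertex $v$ of $G$ let $x(v)$ be the highest node of $T$ whose bag contains $v$; the key structural fact is that for every edge $uv$ of $G$ one of $u,v$ lies in the bag $B_{x(w)}$ where $w$ is the endpoint with $x(w)$ higher, so every edge receives a natural orientation ``towards the root''. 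This gives each edge $uv$ a label consisting of the pair of positions the two endpoints occupy inside the relevant bag; there are $\binom{t+1}{2}$ such labels, and I would show that the edges with a fixed label form an \emph{induced} forest of $G$ — this yields $f_1(G)\le\binom{t+1}{2}$, matching the claimed bound and the $K_{t+1}$ lower bound.

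For $f_2(G)$ the idea is finer: I want each color class to be not just an induced forest but a $2$-strong one, i.e.\ every component has at least two edges, and I must cover all $2$-valid edges. Pendant edges (those lying in no induced path on two edges) are exactly the non-$2$-valid ones and can be discarded. The natural refinement is to color each $2$-valid edge $uv$ by an \emph{unordered triple} of positions in a bag: the two positions of $u,v$ plus the position of a third vertex witnessing $2$-validity, chosen consistently so that it too sits in the same bag. There are $\binom{t+1}{3}$ such triples, but to turn each class into a genuine $2$-strong forest one needs a bit of extra bookkeeping — roughly a factor $3$ coming from choosing which of the three positions plays the role of the ``hub'' that glues pendant edges to longer components — giving the bound $3\binom{t+1}{3}$. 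I would make this precise by describing, for a fixed triple, a rule that assigns to each edge of that class a private witness vertex in a controlled bag, then argue that the resulting union of stars-and-paths is induced (no chord can appear because a chord would have endpoints in positions not matching the triple) and $2$-strong (every pendant-looking edge is attached to its witness).

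The main obstacle, and the step I would spend the most care on, is proving that each color class is genuinely \emph{induced}: one must rule out that two edges with the same label, coming from far-apart parts of the tree, are joined by a chord of $G$. This is where the ``highest bag'' assignment and the connectivity of $\{x : v\in B_x\}$ in $T$ do the work — a putative chord between two such edges would force a vertex to appear in a bag where, by the position labels, it cannot be, contradicting the defining property of tree decompositions. For the $f_2$ bound there is the additional subtlety that witness vertices must be chosen so that distinct edges sharing a triple are actually merged into components of size $\ge 2$ rather than producing isolated $K_2$'s whose attaching vertex got colored differently; handling this cleanly — likely by routing every class through a canonical ``lowest'' occurrence of the hub vertex and absorbing the constant factor $3$ there — is the technical heart of the proof. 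I would finish by checking tightness for $t=2$ on $K_3$ with a pendant edge at each vertex, where every one of the three triangle edges needs its own $2$-strong forest and the pendant edges are forced along, so $f_2=3=3\binom{3}{3}$.
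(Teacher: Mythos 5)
Your $f_1$ argument is sound, and it is essentially the paper's: the position labels you propagate along a smooth width-$t$ decomposition are exactly a proper $(t+1)$-coloring of a $t$-tree $H\supseteq G$, and since $H$ is chordal this coloring is acyclic, so pairs of color classes induce forests and $f_1(G)\leq\binom{t+1}{2}$ (the paper phrases this as $\chi_{\rm acyc}(G)\leq t+1$ plus Theorem~\ref{thm:monotonicity}\ref{enumi:1-strong-from-chi}). Your triple labeling for $f_2$ is likewise the same reduction the paper makes: a witness tree of a $2$-valid edge has three vertices, hence at most three colors, hence the edge is $2$-valid in one of the $\binom{t+1}{3}$ subgraphs induced by three color classes (note the witness vertex need not lie in the same bag as the edge; only its color matters, so the ``chosen consistently so that it sits in the same bag'' clause should be dropped).

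The genuine gap is the per-triple step, which is where all the work lies. The subgraph induced by a fixed triple of colors is an arbitrary graph of tree-width at most~$2$: it can contain many cycles, and chords between edges of your class have endpoints whose positions \emph{do} belong to the triple, so the claim that ``no chord can appear because a chord would have endpoints in positions not matching the triple'' is false, and a triple class is in general very far from an induced forest. What is actually needed is the statement that every graph $H$ with $\tw(H)\leq 2$ satisfies $f_2(H)\leq 3$, and your factor-$3$ ``hub/private witness'' bookkeeping does not establish it. The natural attempt inside a triple --- take the three forests induced by its three pairs of colors --- covers all edges but leaves $K_2$-components that may be $2$-valid edges not lying in any component of size at least two, and no local reattachment of witnesses repairs this: one has to change the $3$-coloring of the tree-width-$2$ graph globally. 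This is exactly what the paper's Theorem~\ref{thm:tw2-2connected} does, by an induction over partial $2$-trees producing a coloring whose pairwise unions have no $K_1$-components and whose $K_2$-components are only twin edges (hence not $2$-valid), with separate treatment of $C_4$, cut vertices, contractible outer edges, and trails around a vertex. You correctly identified this as the technical heart, but the proposal contains no proof of it, so the bound $f_2(G)\leq 3\binom{t+1}{3}$ does not yet follow from your argument.
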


The next theorem shows that the parameter $f_k$ is bounded for graphs of tree-depth $d$ by a polynomial in $d$ as well as a polynomial in $k$.

\begin{theorem}\label{thm:tree-depth}
 For all positive integers $k$, $d$ and any graph $G$ of tree-depth~$d$, $f_k(G) \leq (2k)^{d}$.
 If $d\geq k+1$, then $f_k(G) \leq (2k)^{k+1}\binom{d}{k+1}$.
 Moreover $f_1(G)\leq \binom{d}{2}$.
\end{theorem}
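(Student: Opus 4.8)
The plan is to fix a rooted forest $F$ on $V(G)$ of height $d$ such that every edge of $G$ joins a vertex to one of its $F$-ancestors (this exists since $\td(G)\le d$), and to argue level by level; write $\ell(v)\in\{1,\dots,d\}$ for the depth of $v$ in $F$. The subgraphs of $G$ induced by the vertex sets of the distinct rooted subtrees of $F$ are pairwise vertex-disjoint and pairwise non-adjacent in $G$, and $k$-strong forests in them combine componentwise into a $k$-strong forest of $G$, so I may assume $F$ is a single rooted tree with root $r$. For the statement $f_1(G)\le\binom d2$ I would set $\cF_{ij}:=G[\ell^{-1}(i)\cup\ell^{-1}(j)]$ for $1\le i<j\le d$: every edge lies in exactly one $\cF_{ij}$ since its endpoints lie on two distinct levels, and $\cF_{ij}$ is an induced forest because each vertex on level $j$ has at most one ancestor on level $i$, so $\cF_{ij}$ is a disjoint union of stars centred on level $i$. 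Thus these $\binom d2$ induced forests cover $E(G)$.

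For $f_k(G)\le(2k)^d$ I would induct on $d$, the cases $d\le1$ being trivial. For the step, let $G_1,\dots,G_p$ be the subgraphs of $G$ induced by the rooted subtrees hanging below $r$; each has tree-depth at most $d-1$, and $G-r$ is their disjoint, mutually non-adjacent union. By induction the edges of $G_i$ that are $k$-valid \emph{in $G_i$} are covered by $(2k)^{d-1}$ $k$-strong forests, and combining these across $i$ index by index (legal since the $G_i$ are non-adjacent) covers all of them by $(2k)^{d-1}$ $k$-strong forests of $G$. Every remaining $k$-valid edge $e$ of $G$ — namely those incident to $r$ together with those inside some $G_i$ all of whose witness trees meet $r$ — has a witness tree $T_e$ containing $r$ (if some witness tree of $e$ avoided $r$, then by connectivity and the non-adjacency of the $G_i$ it would lie inside a single $G_i$, making $e$ already covered above), and since $r$ is the root of $F$ it is the $F$-ancestor of all of $T_e$. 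The remaining goal is to cover this set $E^\ast$ with at most $(2k-1)(2k)^{d-1}$ $k$-strong forests of $G$, giving $(2k)^d$ in total.

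For $E^\ast$, associate with each $e$ the $F$-child $x(e)$ of $r$ lying in $T_e$ in the same component of $T_e-r$ as $e$ (so $e=rx(e)$ when $e$ is incident to $r$), and the graph $G_{i(e)}^+:=G[\{r\}\cup V(G_{i(e)})]$ containing $x(e)$; the component of $T_e-r$ containing $x(e)$, together with $r$ and the edge $rx(e)$, is an induced tree with between $1$ and $k$ edges living inside $G_{i(e)}^+$. Working in each $G_i^+$ separately and then combining at $r$ (which merges components only through the single vertex $r$ and preserves inducedness, as distinct $G_i$ are non-adjacent), it suffices to cover, inside each $G_i^+$, the edges incident to $r$ and the $r$-essential edges of $G_i$ by $(2k-1)(2k)^{d-1}$ $k$-strong forests. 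Here I would split off at most $k$ classes for the star of edges at $r$, grouped so that each class extends to a single induced spider at $r$, and let the remaining at most $k-1$ edges of each witness tree be absorbed by the induction hypothesis applied below $r$. \textbf{Making this last step precise is the main obstacle}: one must control how the (possibly large) neighbourhood of $r$ splits into induced spiders, exploiting that a witness tree has only $k$ edges and that $r$ reaches each $G_i$ through a short ancestor chain; this accounting is what produces the factor $2k$ per level.

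Finally, for the refined bound assume $d\ge k+1$. For each $k$-valid edge $e$ fix a witness tree $T_e$ and let $L_e\subseteq\{1,\dots,d\}$ be the set of levels occupied by $V(T_e)$, so $|L_e|\le k+1$; extend $L_e$ to a set $L_e'$ of size exactly $k+1$. For a fixed $(k+1)$-subset $L$ put $G_L:=G[\ell^{-1}(L)]$; restricting $F$ to the levels of $L$ shows $\td(G_L)\le k+1$, and every edge $e$ with $L_e'=L$ has $T_e$ inside $G_L$ and hence is $k$-valid in $G_L$. Since $G_L$ is an induced subgraph of $G$, the $(2k)^{k+1}$ $k$-strong forests given by the already-proved bound $f_k(G_L)\le(2k)^{k+1}$ are $k$-strong forests of $G$, and they cover all edges with $L_e'=L$. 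Ranging over the $\binom d{k+1}$ choices of $L$ yields $f_k(G)\le(2k)^{k+1}\binom d{k+1}$.
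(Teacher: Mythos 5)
Your treatment of the two easier assertions is fine: the level-pair argument gives $f_1(G)\leq\binom d2$ directly (and is even more elementary than the paper's route through tree-width and Theorem~\ref{thm:tw3}), and the reduction of the refined bound to the case $d=k+1$ via $(k+1)$-subsets of levels is correct — it is an explicit instance of the low-tree-depth-coloring argument the paper uses — \emph{provided} the bound $f_k(G)\leq(2k)^d$ is available. But that core bound is exactly where your induction breaks down, and you flag the gap yourself. The edges you collect in $E^\ast$ (those all of whose witness trees pass through the root $r$) are the whole difficulty, and two things go wrong in your sketch. First, the claim that ``it suffices to cover, inside each $G_i^+$, the edges incident to $r$ and the $r$-essential edges of $G_i$ by $k$-strong forests'' is not available: as you yourself note, the piece of a witness tree lying in $G_i^+$ may have as few as one edge, so such an edge need not be $k$-valid in $G_i^+$ at all, and no $k$-strong forest of $G_i^+$ covers it; the covering objects must be assembled from partial structures (induced paths ending at $r$) taken from several branches simultaneously, and whether this is possible depends on how many branches send edges to $r$ (if fewer than $k$ do, stars/spiders at $r$ cannot be completed inside the picture you describe). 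Second, and more fundamentally, nothing in your proposal bounds \emph{how many} edges of $E^\ast$ there are per branch, so the budget $(2k-1)(2k)^{d-1}$ is unsubstantiated.

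The paper's key idea, which is the missing ingredient, is to quantify these root-essential edges: call an edge \emph{almost $k$-valid} if it is not $k$-valid but lies on an induced path through the root, and prove by induction (Lemma~\ref{lem:rootPaths}) that a graph in $\TD(d)$ has at most $(2k)^{d-1}-1$ such edges, at most $2(2k)^{d-2}-1$ when the root has a unique child. With this counting one can afford to spend essentially one forest per problematic edge: the paper splits the $k$-valid edges into five classes ($S_1,\dots,S_5$), handles the edges whose witness trees avoid $r$ or avoid the child $x_i$ by two applications of the induction hypothesis (your first step corresponds to $S_2$), covers the edges forced through both $r$ and $x_i$ by one witness tree per edge combined across branches (at most $2g_k(d-1)$ forests), and covers the edges whose witness trees must span several branches by distinguishing whether at least $k$ branches meet $r$ — building stars/spiders at $r$ in that case, and otherwise using that there are at most $(k-1)g^\star_k(d)$ such edges. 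Your spider idea is in the right direction for the case of many branches at $r$, but without an analogue of Lemma~\ref{lem:rootPaths} (or some other bound on the number of root-essential edges) the induction cannot close, so the proposal as it stands does not prove $f_k(G)\leq(2k)^d$, and with it the refined bound also remains unproved.
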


Ne{\v{s}}et{\v{r}}il and Ossona de Mendez~\cite{NO03} prove that for each minor-closed class $\cC$ of graphs that is not the class of all graphs there is a constant $x$ such that each graph in $\cC$ has acyclic chromatic number at most $x$.
We show how to bound $f_2$ in terms of $x$.

\begin{theorem}\label{thm:acyclic}
 For every minor-closed class of graphs $\cC$ whose members have acyclic chromatic number at most $x$, we have that for every $G\in\cC$,
 \[
  f_2(G) \leq \begin{cases}
    \binom{x}{2}( 3\binom{x}{2}+1),&\text{ if } x\leq 9,\\
    \binom{x}{2}(12x + 1),&\text{ if } x\geq 9.
    \end{cases}
 \]
 For every planar graph $G$ we have $f_2(G) \leq 310$.
\end{theorem}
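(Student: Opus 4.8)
The plan is to start from an acyclic colouring of $G$ and then to repair the induced forests it supplies so that every component acquires at least two edges. Fix an acyclic colouring $c\colon V(G)\to\{1,\dots,x\}$. For $1\le i<j\le x$ let $G_{ij}$ be the subgraph of $G$ induced by $c^{-1}(i)\cup c^{-1}(j)$; by definition of an acyclic colouring each $G_{ij}$ is an induced forest, and together the $\binom{x}{2}$ forests $G_{ij}$ cover $E(G)$. Since $G_{ij}$ is an \emph{induced} subgraph, $G$ has no edge between two of its components, so deleting from $G_{ij}$ all components with at most one edge leaves an induced forest $L_{ij}$ of $G$ that is now $2$-strong. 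These $\binom{x}{2}$ forests cover every $2$-valid edge except those forming a $K_2$-component of their forest $G_{ij}$; call these the \emph{bad edges} of the pair $(i,j)$. A short argument using again that $G_{ij}$ is induced shows that the bad edges of a fixed pair form an induced matching of $G$ and that every vertex lies in at most one bad edge of that pair.

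It remains to cover, for each pair $(i,j)$, its $2$-valid bad edges by a bounded number of $2$-strong forests. Here I would use the witness-tree observation: if $uv$ is a $2$-valid bad edge with $c(u)=i$, $c(v)=j$, then $uv$ lies in an induced $P_3$, so some vertex $w$ is adjacent to exactly one of $u,v$ with $\{u,v,w\}$ inducing a path; moreover $c(w)=\ell$ for some $\ell\notin\{i,j\}$, since $c(w)\in\{i,j\}$ would put $w$ in the $G_{ij}$-component of $uv$. Fix such a \emph{witness path} $P(uv)$ for every $2$-valid bad edge. Each $2$-strong forest I build for the bad edges of $(i,j)$ will simply be an induced forest that is a union of several such witness paths, so every component automatically has at least two edges; the whole question is how many such forests are needed per pair.

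The hard part is exactly this last step. One would like to partition the bad edges of $(i,j)$ by the colour $\ell$ of the extra vertex and by which endpoint it is attached to — at most $2(x-2)$ classes — and hope each class, extended by its witness paths, is already induced. This fails: two witness paths $v\,{-}\,u\,{-}\,w$ and $v'\,{-}\,u'\,{-}\,w'$, which all live in the three colour classes $\{i,j,\ell\}$, can be joined by "cross" edges (edges between colours $i,\ell$ or $j,\ell$ going between the two paths), and two such edges can close a short cycle — for instance the triangle $u,v,w'$ when $w'$ is adjacent to both ends of the bad edge $uv$. Making each group genuinely \emph{induced} rather than merely acyclic is the crux. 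I would exploit sparsity: the subgraph of $G$ induced by any three colour classes has arboricity at most $3$ (it is covered by the three induced forests on its pairs of classes) and is therefore $5$-degenerate; using this one can either choose the witnesses carefully or split each class further so that within each resulting group the union of chosen witness paths is an induced forest. Bounding how many groups this costs per pair $(i,j)$ produces the two regimes in the statement: a fine accounting over pairs/triples of colour classes costs $3\binom{x}{2}$ auxiliary forests per pair and is cheapest for small $x$, whereas a coarser argument using only the $5$-degeneracy together with the fact that $G$ itself has arboricity at most $\binom{x}{2}$ costs $12x$ per pair and wins once $x\ge 9$. Adding the $\binom{x}{2}$ forests $L_{ij}$ gives $f_2(G)\le\binom{x}{2}\bigl(\min\{3\binom{x}{2},\,12x\}+1\bigr)$, which is the claimed bound (the two cases crossing over exactly at $x=9$).

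Finally, for the planar statement I would invoke Borodin's theorem that every planar graph has acyclic chromatic number at most $5$; since planar graphs form a minor-closed class, the case $x=5$ of the bound applies and yields $f_2(G)\le\binom{5}{2}\bigl(3\binom{5}{2}+1\bigr)=10\cdot 31=310$. Everything outside the treatment of the bad edges is routine bookkeeping on top of the acyclic colouring; the main obstacle, and the only place the precise constants are fought for, is the third step above.
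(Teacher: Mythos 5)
Your first step (splitting each two-colour forest into its $2$-strong part and the induced matching of $K_2$-components) matches the paper, but the step you yourself identify as the crux --- covering the $2$-valid ``bad'' edges by boundedly many $2$-strong forests --- is not actually carried out: the witness-path grouping is only sketched, the claimed costs of $3\binom{x}{2}$ and $12x$ per pair are read off from the statement rather than derived, and ``choose the witnesses carefully or split each class further'' is exactly the part that needs a proof. Worse, the route you sketch cannot be completed in principle, because it never uses the hypothesis that $\cC$ is minor-closed: everything you invoke (witness trees, arboricity and degeneracy of unions of colour classes) is a consequence of $\chi_{\rm acyc}(G)\leq x$ alone, and by Theorem~\ref{thm:monotonicity}\ref{enumi:1-strong-from-chi} together with Theorem~\ref{thm:monotonicity}\ref{enumii:f_k-smaller-f_k+1} (applied with $k=1$: doubly subdivided cliques have $f_1\leq 3$, hence $\chi_{\rm acyc}\leq 27$, yet $f_2$ arbitrarily large) the parameter $f_2$ is \emph{not} bounded by any function of the acyclic chromatic number. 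So any argument that only consults the given acyclic colouring of $G$ and its induced subgraphs must fail on such examples.

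The paper resolves exactly this step by using minors: for each pair $(i,j)$ it \emph{contracts} the matching $M_{i,j}$ of bad edges; since $\cC$ is minor-closed, the contracted graph is again in $\cC$ and admits a fresh acyclic colouring with at most $x$ colours. Uncontracting the forest spanned by any two of these new colour classes yields an induced subgraph of $G$ of tree-width at most $2$ (Lemma~\ref{uncontract}, using that $M_{i,j}$ is an induced matching), whose $2$-valid edges are covered by $3$ two-strong forests by Theorem~\ref{thm:tw3}; this gives $3\binom{x}{2}$ forests per pair and the bound $\binom{x}{2}(3\binom{x}{2}+1)$. The $12x$ regime comes from a variant in which all bad edges are first split by Vizing's theorem into at most $x$ matchings (no longer induced), so uncontraction only gives tree-width at most $3$ and hence $12$ forests per colour pair. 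If you want to salvage your write-up, replace the sparsity/degeneracy step by this contract--recolour--uncontract argument; the planar conclusion via Borodin's theorem then goes through as you stated.
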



\paragraph{Organization of the paper:}
In Section~\ref{sec:preliminaries} we review the concepts tree-width and tree-depth of a graph, graph classes of bounded expansion, and nowhere dense classes of graphs by giving the formal definitions and discussing the interrelations.
We also define the adjacent closed vertex-distinguishing number of a graph in Section~\ref{sec:preliminaries}.
We prove Theorem~\ref{thm:monotonicity} in Section~\ref{sec:monotonicity}.
We consider graphs of bounded tree-width in Section~\ref{sec:tree-width} and prove Theorem~\ref{thm:tw3} in that section.
The proofs of the bounds on $f_k$ in terms of the acyclic chromatic number and the proof of Theorem~\ref{thm:acyclic} are given in Section~\ref{sec:minor-closed}.
Graphs of bounded tree-depth, and more general classes of graphs, are considered in Section~\ref{sec:tree-depth}, where we prove Theorem~\ref{thm:tree-depth}.
We prove the main theorem, Theorem~\ref{thm:main}, in Section~\ref{sec:main}.
Finally we summarize our results, state some open questions, and discuss other variants of the strong induced arboricity in Section~\ref{sec:Conclusions}.


\section{Preliminaries}\label{sec:preliminaries}

\paragraph{Notation:}
For an integer $s \geq 1$ we denote by $[s]$ the set $\{1,\ldots,s\}$ of the first $s$ natural numbers.
For all further standard graph theoretic notions and notations we refer the reader to the book of West~\cite{West}.

\paragraph{Graph parameters and graph class properties:}
In this section we formally define the parameters and properties for graphs and graph classes mentioned in the introduction, namely tree-width, tree-depth and adjacent closed vertex-distinguishing number of graphs, and graph classes of bounded expansion and nowhere dense classes of graphs.
Moreover, we summarize the relationships between those parameters and classes.
We shall follow the notions used by Ne{\v{s}}et{\v{r}}il and Ossona de Mendez~\cite{NO06,NO08}, see also~\cite{GPP13,NO12}.

\begin{description}
 \item{Tree decompositions:~~}
  A \emph{tree decomposition} of a graph $G$ with vertex set $V(G) = \{v_1,\ldots,v_n\}$ is a pair $(T,B)$ of a tree $T$ and a set $B = \{B_1,\ldots,B_n\}$ of non-empty subsets of $V(T)$, the vertex set of $T$, such that \textbf{(A)} for $i =1,\ldots,n$ the vertices in $B_i$ induce a connected subgraph of $T$ and \textbf{(B)} for each edge $v_iv_j$ in $G$ we have $B_i \cap B_j \neq \emptyset$.
  Intuitively speaking, a tree decomposition of $G$ is a representation of a supergraph of $G$ as the intersection graph of some subtrees of the tree $T$.
  The \emph{width} of a tree decomposition $(T,B)$ is defined as $\max_{v \in V(T)}|\{i \in [n] \mid v \in B_i\}| - 1$.
  
 \item{Tree-width:~~}
  The \emph{tree-width} of a graph $G$, denoted by $\tw(G)$, is the smallest $t$ such that $G$ has a tree decomposition of width $t$.
  Equivalently, a graph $G$ has tree-width $t$ if $t$ is the smallest number for which $G$ is a spanning subgraph of a $t$-tree $H$, where a \emph{$t$-tree} can be recursively defined as follows:
  The graph $K_{t+1}$ is a $t$-tree, and if $H$ is a $t$-tree, $C$ is a clique of order $t$ in $H$ and $H'$ arises from $H$ by adding a new vertex whose neighborhood is $C$, then $H'$ is also a $t$-tree~\cite{Bod98}.
  Note that the graphs of tree-width $1$ are the forests with at least one edge.
  

 \item{Tree-depth:~~}
  The \emph{transitive closure} of a rooted tree $T$ with a root $r$ is the graph obtained from $T$ by adding every edge $uv$ such that $v$ is on the $u$-$r$-path of $T$.
  A rooted tree has \emph{depth $d$} if the largest number of vertices on a path to the root is $d$. 
  Now, a graph $G$ has \emph{tree-depth} $d$, denoted by $\td(G) = d$, if $d$ is the smallest integer such that each connected component of $G$ is a subgraph of the transitive closure of a rooted tree of depth $d$.

 \item{Tree-depth coloring:~~}
  A \emph{$p$-tree-depth coloring} of a graph $G$ is a vertex coloring such that each set of $p'$ color classes, $p' \leq p$, induces a subgraph of $G$ with tree-depth at most~$p'$.
  So a $1$-tree-depth coloring is exactly a proper coloring of $G$, while a $2$-tree-depth coloring is a proper coloring of $G$ in which any two color classes induce a star forest (a graph of tree-depth at most~$2$).
  Let $\chi_p(G)$ be the minimum number of colors needed in a $p$-tree-depth coloring of $G$.
  Then $\chi(G) = \chi_1(G)$ and 
  $\chi_p(G) \leq \td(G)$ for any $p\geq 1$~\cite{NO08}.

 \item{Shallow minor:~~}
  For a graph $H$ and a non-negative integer $d$, we say that a graph $G$ with vertex set $\{v_1,\ldots,v_n\}$ is a \emph{$d$-shallow minor} of $H$ if there exist pairwise disjoint subsets of vertices $B_1,\ldots,B_n$ in $H$ such that \textbf{(A)} for $i=1,\ldots,n$ the vertices in $B_i$ induce a connected subgraph of $H$ with radius at most $d$ and \textbf{(B)} for each edge $v_iv_j$ of $G$ there is an edge in $H$ with one endpoint in $B_i$ and one endpoint in $B_j$.
  Here a graph has radius at most $d$ if there is a vertex that is within distance $d$ to all other vertices.
  The class of all $d$-shallow minors of graphs from a class $\cC$ of graphs is denoted by $\cC \nabla d$.
  Note that $\cC \nabla 0$ is exactly the class of all subgraphs of graphs in $\cC$.
  
 \item{Bounded expansion:~~}
  A class $\cC$ of graphs is of \emph{bounded expansion} if for every non-negative integer $d$ there is a constant $a_d = a(d,\cC)$, such that every $d$-shallow minor of a graph in $\cC$ has at most $a_d$ times as many edges as vertices.
  That is, for every $G \in \cC \nabla d$ we have $|E(G)| \leq a_d |V(G)|$.
  Equivalently, $\cC$ is of bounded expansion if for each positive integer $p$ there is a constant $b_p = b(p,\cC)$ such that for each $G\in\cC$ we have $\chi_p(G) \leq b_p$~\cite{NO12}.
  
 \item{Nowhere dense:~~}
  A class $\cC$ of graphs is \emph{nowhere dense} if for each non-negative integer $d$ we have that $\cC \nabla d$ is not the class of all graphs.
  That is, for each $d$ there exists a graph $G_d = G(d,\cC)$ such that $G_d$ is not a $d$-shallow minor of any graph in $\cC$, i.e., $G_d \notin \cC \nabla d$.
  
 \item{Adjacent closed vertex-distinguishing number:~~}
  For a graph $G$, an assignment of positive integers to its vertices is called \emph{distinguishing} if the sum of the labels in the closed neighborhood $N[v]$ of any vertex $v$ differs from the sum in the closed neighborhood of any of the neighboring vertices $u$ of $v$, unless $N[u]=N[v]$.
  Note that $N[u] = N[v]$ if and only if $uv$ is an edge that is not $2$-valid.
  The smallest positive integer $\ell$ such that there is a distinguishing labeling of $G$ with labels in $\{1,\ldots,\ell\}$ is called \emph{adjacent closed vertex-distinguishing number} of $G$, denoted $\dis[G]$. 
\end{description}

Tree-width is an important graph parameter and the corner stone of the Graph Minor Project~\cite{RS84}.
It is well-known that tree-width is monotone under taking graph minors, namely, if $H$ is a minor of $G$, then $\tw(H) \leq \tw(G)$, see for example~\cite{Bod98}.
Thus, by the Graph Minor Theorem~\cite{RS04}, the class of all graphs of tree-width at most $d$ is characterized by finitely many excluded minors.
On the other hand, for example planar graphs are characterized by two excluded minors, but there are planar graphs of arbitrarily large tree-width.
The equivalent definition of tree-width~$t$ graphs as subgraphs of $t$-trees and the recursive definition of $t$-trees is very convenient for inductive proofs.
For example, following the construction sequence of a $t$-tree $H$, we easily see that $\chi(H) = t+1$ and thus $\chi(G) \leq t+1$ whenever $\tw(G) \leq t$.

The tree-depth is somehow a more restrictive variant of the tree-width, measuring how far a graph is from being a star rather than a tree.
For any fixed graph $G$ we have that $\tw(G) \leq \td(G)-1$.
On the other hand, if $G$ has tree-depth~$d$, then the longest path in $G$ has at most $2^d - 1$ vertices.
In particular, even graphs of tree-width~$1$ can have arbitrarily large tree-depth~\cite{NO08}.
Just like tree-width, tree-depth is a minor-monotone graph parameter~\cite{NO12}.

Bounded expansion was introduced by Ne\v{s}et\v{r}il and Ossona de Mendez~\cite{NO08} as a notion of sparsity of a graph class $\cC$.
Ne{\v{s}}et{\v{r}}il and Ossona de Mendez~\cite{NO06,NO08,NOW12} proved that several classes of graphs are of bounded expansion, such as proper minor-closed classes, classes of graphs with an excluded topological minor, or classes where the graphs have bounded degree, bounded book thickness, or bounded queue number.


Using shallow minors, Ne\v{s}et\v{r}il and Ossona de Mendez~\cite{NO12} split all graph classes into somewhere dense and nowhere dense classes.
Even though they are generalizing classes of bounded expansion, nowhere dense classes still contain sparse graphs in the sense that for every $\varepsilon > 0$ the $n$-vertex graphs in a nowhere dense class $\cC$ have $O(n^{1+\varepsilon})$ edges.
Nowhere dense graph classes have nice algorithmic properties and admit several (seemingly unrelated) characterizations~\cite{GKS13}.

While for an analogous notion $\dis(G)$ with open neighborhoods considered instead of closed neighborhoods, it is known that there is a constant $c$ such that $\dis(G)\leq c$ for any planar graph $G$, as noted by Norine, see~\cite{GB}, it was not known whether the adjacent closed vertex-distinguishing number $\dis[G]$ is bounded by a universal constant for all planar graphs.
Corollary~\ref{cor:bounded-expansion-distinguishing} answers this question in the affirmative, since the class of all planar graphs is a graph class of bounded expansion.

\medskip

In Figure~\ref{fig:graph-class-properties} we display several graph class properties and depict their relations as they follow from the discussion above, Theorem~\ref{thm:main}, and Theorem~\ref{thm:monotonicity}\ref{enumi:nowhere-dense-not-f-bounded}.
All relations in Figure~\ref{fig:graph-class-properties} are strict, that is, whenever there is an arrow from $A$ to $B$ then every graph class with property $A$ has also property $B$, but there are graph classes that have $B$ but not $A$.
Recall that a graph class $\cC$ is $f$-bounded if for every $k \geq 1$ there exists a constant $c_k = c(k,\cC)$ such that $f_k(G) \leq c_k$ for every $G \in \cC$.

\begin{figure}[tb]
 \centering
 \includegraphics{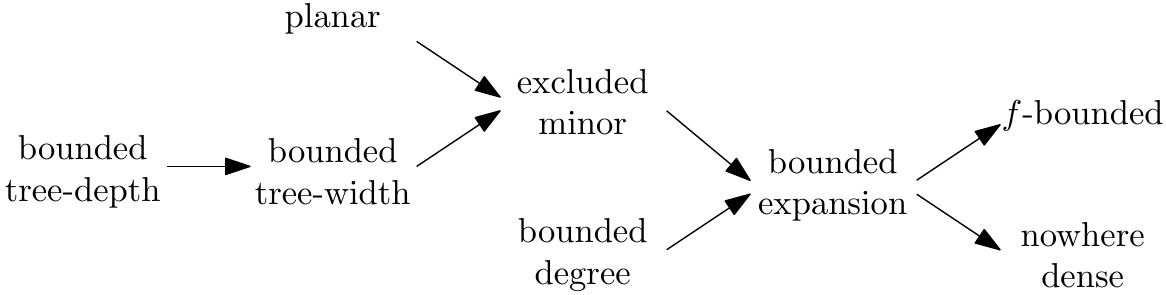}
 \caption{Overview of properties of some graph classes considered in this paper. Arrows go from more restrictive properties to more general properties.}
 \label{fig:graph-class-properties}
\end{figure}

\section{Proof of Theorem~\ref{thm:monotonicity}}
\label{sec:monotonicity}

In this section we prove the general properties of the parameter $f_k$ listed in Theorem~\ref{thm:monotonicity}.

\begin{proof}[Proof of Theorem~\ref{thm:monotonicity}\ref{enumi:1-strong-from-chi}]
For the first inequality, consider a covering of $E(G)$ with $x=f_1(G)$ induced forests $F_1, \ldots, F_x$ and for each forest a proper $2$-coloring of its vertices.
Let $c_1, \ldots, c_x$ be colorings of $V(G)$ in colors $\{0,1,2\}$ such that $c_i(v) =0$ if $v\not \in V(F_i)$, $c_i(v) = 1$ if $v$ is from the first color class of $F_i$, and $c_i(v) = 2$ if $v$ is from the second color class of $F_i$. 
Let a coloring $\varphi$ of $V(G)$ be defined as $\varphi(v) = (c_1(v), \ldots, c_x(v))$, $v\in V(G)$. To see that $\varphi$ is an acyclic coloring assume that two color classes $(a_1, \ldots, a_x)$, $(b_1, \ldots, b_x)$ induce a cycle $C$.
Let $e$ be an edge of $C$.
It is in some $F_i$ and hence $\{a_i, b_i\} = \{1,2\}$.
Thus the $i^{\rm th}$ coordinate of $\varphi$ in the cycle $C$ alternates between $1$ and $2$. This implies that all edges of $C$ belong to $F_i$, a contradiction since $F_i$ is acyclic. 
For similar reasons $\varphi$ is proper.
Thus $\varphi$ is an acyclic coloring, proving that $\chi_{\rm acyc}(G) \leq 3^x$ and thus $\log_3(\chi_{\rm acyc}(G)) \leq x = f_1(G)$.

For the second inequality, consider an acyclic proper coloring of $G$ using $\chi_{\rm acyc}(G)$ colors.
 For every pair of colors $c_1$, $c_2$ the subgraph of $G$ induced by the vertices of color $c_1$ or $c_2$ is an induced forest in $G$.
 Moreover, every edge of $G$ is contained in exactly one such induced forest.
 Hence, by removing all isolated vertices from each such forest, we get $f_1(G) \leq \binom{\chi_{\rm acyc}}{2}$.
\end{proof}

 \begin{proof}[Proof of Theorem~\ref{thm:monotonicity}\ref{enumii:a-vs-f_1}]
 Let $R^{-1}(t)$ denote the smallest number of colors needed to color $E(K_t)$ without monochromatic triangles.
 By Ramsey's Theorem~\cite{Ram30,W97} we have $R^{-1}(t) \to \infty$ as $t \to \infty$.
 Choose $t$ sufficiently large such that $R^{-1}(t) \geq n^2$ and, additionally, $t \geq \max\{k,3\}$.
 
 Let $G$ be the graph obtained from $K_t$ by subdividing each edge once.
 For an edge $e$ in $K_t$ let $e_1$ and $e_2$ denote the two corresponding edges in $G$.
 Split $G$ into two subgraphs $G_1$ and $G_2$ where $G_i$ contains all edges $e_i$, $e\in E(K_t)$, $i=1,2$.
 Then $E(G) = E(G_1)\cup E(G_2)$ and, for $i=1,2$, each component of $G_i$ is a star with center at an original vertex of $K_t$.
 Therefore $a(G) \leq 2$ and as $t \geq 3$, we have $a(G) = 2$.
 We remark that each component of $G_i$ ($i=1,2$) is induced, but as long as $t \geq 4$ one of $G_1,G_2$ induces a $6$-cycle, see Figure~\ref{fig:monoK3} (left part).

 Let $N = f_1(G)$ and consider induced forests $F_1,\ldots,F_N$ covering all edges of $G$. 
 We consider the following edge-coloring of $K_t$.
 If there is an $i$, $1\leq i\leq N$, with $e_1$, $e_2\in E(F_i)$, then color the edge $e$ with color $i$ (choose an arbitrary such $i$).
 Otherwise there are $i$ and $j$, $1\leq i<j\leq N$, with $e_1$, $e_2\in E(F_i)\cup E(F_j)$, $i\neq j$, and we color the edge $e$ with color $\{i,j\}$ (choose an arbitrary such pair).
 This coloring uses at most $N + \binom{N}{2}=\binom{N+1}{2}$ colors.
 We claim that there are no monochromatic triangles under this coloring.
 Indeed there is no triangle in color $i$, $1\leq i\leq N$, since $F_i$ contains no cycle, and there is no triangle in color $\{i,j\}$, $1\leq i<j\leq N$, since $F_i$ and $F_j$ are induced. Therefore $\binom{f_1(G)+1}{2} = \binom{N+1}{2} \geq R^{-1}(t) \geq n^2$.
 This shows that $f_1(G) \geq n$, since $\binom{n}{2}<n^2 \leq R^{-1}(t)$.
 Moreover, as $t \geq k$ every edge in $G$ is $k$-valid and thus $f_k(G) \geq f_1(G) \geq n$.
\end{proof}

\begin{proof}[Proof of Theorem~\ref{thm:monotonicity}\ref{enumii:f_k-smaller-f_k+1}]
 Like in the proof of part~\ref{enumii:a-vs-f_1}, let $R^{-1}(t)$ denote the smallest number of colors needed to color $E(K_t)$ without monochromatic triangles.
 By Ramsey's Theorem~\cite{Ram30,W97} we have $R^{-1}(t) \to \infty$ as $t \to \infty$.
 Choose $t$ sufficiently large such that $R^{-1}(t) \geq n^2$ and, additionally, $t\geq 2k+2$.
 
 Let $G$ be obtained from $K_t$ by subdividing each edge twice and choosing for each original edge of $K_t$ one of its subdivision vertices and adding $k-1$ pendant edges to this vertex, see Figure~\ref{fig:monoK3} (middle part) when $k=2$.
 Observe that all edges of $G$ are $k$-valid and $(k+1)$-valid.
 
 First we shall show that $f_k(G)\leq 3$ by finding $3$ $k$-strong forests covering all edges of $G$.
 For an edge $e$ in $K_t$ let $e_1$, $e_2$, $e_3$ denote the subdividing edges in $G$, with $e_2$ the middle one.
 Let $T_1$ be the subgraph consisting of all edges $e_2$, $e\in E(K_t)$, and all edges adjacent to $e_2$ different from $e_1$ and $e_3$ (the pendant edges).
 Then $T_1$ is an induced forest and each component of $T_1$ is a star on $k$ edges.
 Since $t\geq 2k+2$, we can choose an orientation of $K_t$ such that each vertex has out-degree and in-degree at least $k$.
 Indeed, if $t$ is odd we find such an orientation by following an Eulerian walk, if $t$ is even, we find such an orientation of $K_{t-1}$ as before and orient the edges incident to the remaining vertex $x$ such that at least $k$ of these edges are in-edges at $x$ and at least $k$ of them are out-edges at $x$. 
 For each edge $e=uv$ in $K_t$ that is oriented from $u$ to $v$ put the edge in $\{e_1,e_3\}$ that is incident to $u$ into $T_2$ and the other edge from $\{e_1,e_3\}$ into $T_3$.
 Then $T_2$ and $T_3$ are induced forests and each component of $T_2$ and $T_3$ is a star on at least $k$ edges.
 Moreover each edge of $G$ is contained in $E(T_1)\cup E(T_2)\cup E(T_3)$.
 Therefore $f_k(G)\leq 3$.
 
 \medskip
 Next, we prove that $f_{k+1}(G)\geq n$. 
 Let $N = f_{k+1}(G)$ and consider $(k+1)$-strong forests $F_1,\ldots,F_N$ covering all edges of $G$. 
 For each edge $e$ of $K_t$, if $F_i$ contains $e_2$, then it contains either $e_1$ or $e_3$ as well, since each component of $F_i$ has at least $k+1 \geq 2$ edges.
 We consider the following edge-coloring of $K_t$.
 If there is an $i$, $1\leq i\leq N$, such that $e_1$, $e_2$, $e_3\in E(F_i)$, then color the edge $e$ with $i$ (choose an arbitrary such $i$).
 Otherwise there are distinct $i$, $j$, $1\leq i,j\leq N$, such that, without loss of generality, $e_1$, $e_2\in E(F_i)$ and $e_3\in E(F_j)$.
 In this case color the edge $e$ with the pair $(i,j)$ (choose an arbitrary such pair).
 This coloring uses at most $N + N(N-1)=N^2$ colors.
 We claim that there are no monochromatic triangles under this coloring.
 Indeed, for any $i$ and $j$ there is no triangle in color $i$, $1\leq i\leq N$, since $F_i$ contains no cycle, and there is no triangle in color $(i,j)$, $1\leq i,j\leq N$, since $F_i$ and $F_j$ are induced.
 See Figure~\ref{fig:monoK3} (right part) in case $k=4$.
 \begin{figure}
 \centering
 \includegraphics{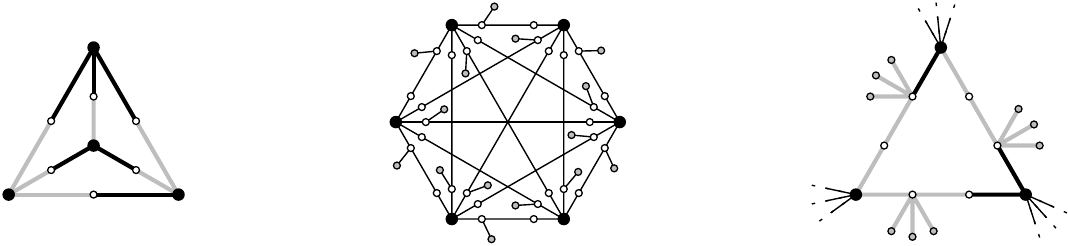}
 \caption{Illustrations of the proofs of Theorem~\ref{thm:monotonicity}\ref{enumii:a-vs-f_1} and~\ref{enumii:f_k-smaller-f_k+1}.
  Left: The graph $G$ obtained from $K_4$ and a partition into two forests $G_1,G_2$.
  Note that both forests induce $6$-cycles.
  Middle: The graph $G$ obtained from $K_6$ for $k=2$.
  Right: A subgraph corresponding to a monochromatic triangle in $K_t$. 
  Note that the gray forest is not induced.}
 \label{fig:monoK3}
 \end{figure}
 Therefore the number of colors is at least $R^{-1}(t)$ and at most $N^2 =f_{k+1}^2(G)$. Thus $f_{k+1}(G) \geq \sqrt{R^{-1}(t)} \geq n$.
\end{proof}

\begin{proof}[Proof of Theorem~\ref{thm:monotonicity}\ref{enumii:f_k-larger-f_k+1}]
 Consider the graph $G$ formed by taking the union of a clique on $n+1$ vertices and a path of length $k-1$ that shares an endpoint with the clique. 
 Then we see that all edges of $G$ incident to the path are $k$-valid. However, no two edges of the clique could be in the same induced forest, thus $f_k(G)\geq n$. 
 On the other hand, since each induced tree in $G$ contains at most one edge from the clique, it could have at most $k$ edges. Thus there are no $(k+1)$-valid edges and $f_{k+1}(G) = 0$.
\end{proof}

\begin{proof}[Proof of Theorem~\ref{thm:monotonicity}\ref{enumii:f_k-subgraphs} and~\ref{enumii:tw-vs-f_k}]
 Consider the graph $G$ shown in Figure~\ref{fig:zigzag}.
 We see from Figure~\ref{fig:zigzag} that $G$ is covered by three large induced trees (a bold, a solid, and a dashed path) and thus $f_k(G)\leq 3$.
 Let $H$ be its induced subgraph formed by the bold vertices shown in Figure~\ref{fig:zigzag}. 
 We see that $H$ is formed by a path $u_1, u_2, \ldots , u_{2k}$ and independent vertices $w_1, w_2, \ldots, w_{2k-1}$ such that $w_i$ is adjacent to $u_i$ and $u_{i+1}$.
 Then consider the matching in $H$ formed by the edges $u_iw_i$, $k\leq i\leq 2k-1$, and an induced tree $T_i$
 on vertex set $\{u_{i-k+1},\ldots,u_i,w_i\}$ in $H$ of size $k$ containing $u_iw_i$, $k\leq i \leq 2k-1$. 
 We see that the trees $T_{k},\ldots,T_{2k-1}$ are distinct and their pairwise union induces a triangle in $H$.
 Thus no two of them can belong to the same $k$-strong forest in $H$. Hence $f_k(H)\geq k$.
 This proves Theorem~\ref{thm:monotonicity}\ref{enumii:f_k-subgraphs}.
 In addition, $\tw(H)=2$.
 This proves Theorem~\ref{thm:monotonicity}\ref{enumii:tw-vs-f_k} (where $H$ plays the role of $G$ from the theorem).
 \begin{figure}
 \centering
 \includegraphics{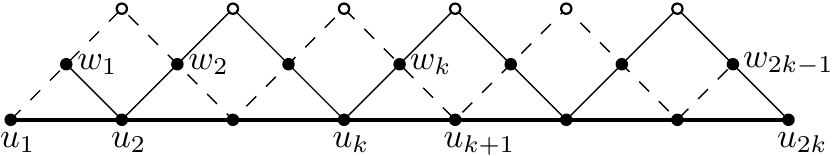}
 \caption{A graph $G$ illustrating the proof of Theorem~\ref{thm:monotonicity}\ref{enumii:f_k-subgraphs} and~\ref{enumii:tw-vs-f_k} in case $k=4$ with the subgraph $H$ induced by bold vertices. The marked paths (one bold, one solid, one dashed) form three induced forests covering all the edges of $G$.}
 \label{fig:zigzag}
 \end{figure}
\end{proof}

\begin{proof}[Proof of Theorem~\ref{thm:monotonicity}\ref{enumii:td-vs-f_k}]
 \begin{figure}
 \centering
 \includegraphics{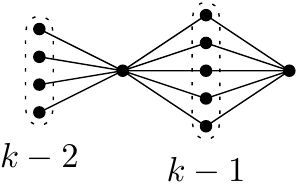}
 \caption{A graph $G$ illustrating the proof of Theorem~\ref{thm:monotonicity}(ii.f).}
 \label{fig:td2}
\end{figure}
 Consider the graph $G$ shown in Figure~\ref{fig:td2}. 
 Then $\td(G)=3$ (look at the cut vertex as a root of the underlying tree) and $f_k(G) = k-1$, since each edge incident to the rightmost vertex is $k$-valid but no $k$-strong forest contains two of these edges.
\end{proof}

\begin{proof}[Proof of Theorem~\ref{thm:monotonicity}\ref{enumi:nowhere-dense-not-f-bounded}]
 Let $\cC$ be any infinite class of connected graphs $G_n$, $n \geq 1$, where $G_n$ has girth and chromatic number at least $n$.
 Due to the girth condition $\cC$ is nowhere dense.
 In fact, for $n>6d+3$ any $d$-shallow minor of $G_n$ contains no triangle and hence for any $d$ any large enough graph with a triangle is not contained in $\cC \nabla d$.
 On the other hand, for $n \geq k+2$ each edge of $G_n$ is $k$-valid.
 As $\chi_{\rm acyc}(G_n) \geq \chi(G_n) \geq n$, we conclude from part~\ref{enumi:1-strong-from-chi} that the graphs in $\cC$ have unbounded $k$-strong induced arboricity.
\end{proof}

\begin{proof}[Proof of Theorem~\ref{thm:monotonicity}\ref{enumi:f_k-notNowhereDense}]
 Consider the complete bipartite graph $K_{n,n}$, $n \geq 1$, and let $G_n$ be the graph obtained from $K_{n,n}$ by subdividing each edge once.
 We claim that for any integers $k,n \geq 1$ we have $f_k(G_n) \leq 2$, and moreover that the graph class $\cC = \{G_n \mid n \in \bN\}$ is not nowhere dense.
 For the latter we shall find a constant $d$ such that the class $\cC \nabla d$ of all $d$-shallow minors of graphs in $\cC$ contains all complete graphs and therefore all graphs.
 In fact for any fixed $n$ we shall show that $K_n$ is a $2$-shallow minor of $G_n \in \cC$.
 Let $M = \{u_iv_i \mid i \in [n]\}$ be a perfect matching in $K_{n,n}$ where all $u_i$ are in the same partite set.
 For $i = 1,\ldots,n$ set $B_i = \{u_i,v_i\} \cup N(u_i)$, where $N(u_i)$ denotes the set of all neighbors of $u_i$ in $G_n$.
 Then $B_1,\ldots,B_n$ partition the vertex set of $G_n$ into disjoint subsets, each inducing a connected subgraph of $G_n$ with radius~$2$.
 Moreover, for any $i \neq j$ there is an edge in $G_n$ between $G_n[B_i]$ and $G_n[B_j]$.
 Thus $K_n$ is a $2$-shallow minor of $G_n$, proving that $\cC$ is not nowhere dense.

 Next we prove that, for any integers $k, n \geq 1$ we have $f_k(G_n) \leq 2$.
 To this end, we construct two maximum induced trees in $G_n$ covering all edges of $G_n$.
 Clearly, $|V(G_n)| = 2n+n^2$ and we claim that a largest induced tree in $G_n$ contains exactly $n+1 + n^2 = |V(G_n)| - (n-1)$ vertices.
 Let $\mu(G_n)$ denote the smallest number of vertices in $G_n$ whose deletion makes the graph acyclic.
 (That is, $\mu(G_n)$ denotes the size of a minimum feedback vertex set -- see~\cite{FPR99} for a survey on feedback set problems.)
 We shall prove by induction on $n$ that $\mu(G_n) \geq n-1$.
 In fact, for $n = 1$, $G_n$ is a tree itself and thus $\mu(G_1) = 0$.
 For $n \geq 2$, consider an $8$-cycle in $G_n$ consisting of four original vertices $v_1,v_2,v_3,v_4$ of $K_{n,n}$, $v_1$, $v_2$ from one bipartition class and $v_3$, $v_4$ from the other, and the four subdivision vertices corresponding to the four edges $v_1v_3$, $v_1v_4$, $v_2v_3$ and $v_2v_4$ in $K_{n,n}$.
 At least one of these eight vertices has to be deleted to make the graph acyclic, say it is one of $v_1$, $v_3$, or the vertex $x$ subdividing edge $v_1v_3$.
 Then $G_n - (N[v_1] \cup N[v_3])$ is isomorphic to $G_{n-1}$ and thus at least $\mu(G_{n-1})$ further vertices have to be deleted.
 Hence by induction we get $\mu(G_n) \geq \mu(G_{n-1}) + 1 \geq (n-2) + 1 = n-1$, as desired.
 Thus any induced tree in $G_n$ has at most $n^2 + 2n - (n-1)$ vertices.
 
 \begin{figure}
  \centering
  \includegraphics{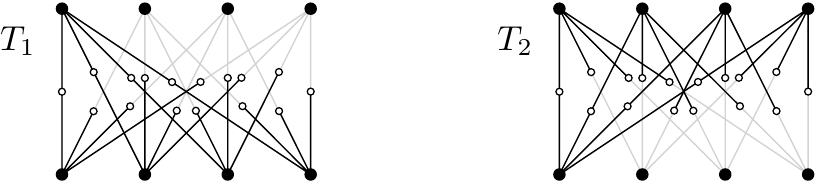}
  \caption{Two maximum induced trees $T_1$, $T_2$ covering all edges of the graph $G_n$ ($n = 4$) obtained from $K_{n,n}$ by subdividing every edge once.}
  \label{fig:K_nn-trees}
 \end{figure}

 On the other hand, one obtains a maximum induced tree $T_1$ by deleting $n-1$ original vertices of $K_{n,n}$ that belong to the same bipartition class, see Figure~\ref{fig:K_nn-trees}.
 Deleting $n-1$ vertices from the other bipartition class gives symmetrically a maximum induced tree $T_2$.
 Finally, observe that $T_1$ and $T_2$ together cover all edges of $G_n$, which certifies that $f_k(G_n) \leq 2$ for $k \leq n+1 + n^2$.
 For $k > n+1 + n^2$ no edge of $G_n$ is $k$-valid and thus $f_k(G_n) = 0$. 
%
\end{proof}

 \section{Graphs of bounded tree-width}\label{sec:tree-width}

We start with a list of properties of graphs of tree-width~$2$.
Then we shall prove that $f_2(G)\leq 3$ for any graph $G$ of tree-width~$2$.
This is the main part of the proof, where we shall use the definition of tree-width~$2$ graphs as being exactly the partial $2$-trees.
Then, we shall do an easy reduction argument using tree decompositions to express the upper bound from Theorem~\ref{thm:tw3} on $f_2(G)$ for graphs $G$ of larger tree-width.


\subsection{Properties of graphs with tree-width 2 and observations}

Consider any fixed graph $G$ of tree-width~$2$.
Firstly, $G$ contains no subdivision of $K_4$~\cite{Bod88}.
(In fact, this property characterizes tree-width~$2$ graphs.)
Moreover, it is well-known (see for example~\cite{Ree03}) that as long as $|V(G)| \geq 3$, there is a $2$-tree $H$ with $G \subseteq H$ and $V(H) = V(G)$.
Let us fix such a $2$-tree $H$.
Every edge of $H$ is in at least one triangle of $H$.
Consider the partition $E(H) = E_\text{in}(H) \dot\cup E_\text{out}(H)$ of the edges of $H$, where $E_\text{out}(H)$ consists of those edges that are contained in only one triangle of $H$, called the \emph{outer edges of $H$}.
Respectively, $E_\text{in}(H)$ consists of those edges that are contained in at least two triangles of $H$,
called the \emph{inner edges of $H$}.
Note, if $H$ is outerplanar, every edge is in at most two triangles, and our definition corresponds to the usual partition into outer and inner edges of an outerplanar embedding of $H$.

The following two statements can be easily proved by induction on $|V(H)|$.
Indeed, both statements hold with ``if and only if'' and are maintained in the construction sequence of the $2$-tree $H$.

\begin{enumerate}[label = \textbf{(P\arabic*)}]
 \item If $v \in V(H)$ is incident to two outer edges in the same triangle of $H$, then $\deg_H(v) = 2$.\label{enum:degree-2-vertices}

 \item If $uw \in E_\text{in}(H)$, then $H - \{u,w\}$ is disconnected.\label{enum:inner-edges-disconnect}
\end{enumerate}

It is easy to see that for any $2$-connected graph $F$ with $|V(F)| \geq 4$ and for any two vertices $u,w \in V(F)$ we have the following:

\begin{enumerate}[label = \textbf{(P\arabic*)}, start = 3]
 \item For every connected component $K$ of $F - \{u,w\}$ we have $N(u) \cap V(K) \neq \emptyset$ and $N(w) \cap V(K) \neq \emptyset$.\label{enum:neighbors-in-components}
 \item The graph $F - \{u,w\}$ is connected if and only if the graph $F'$ obtained from $F$ by identifying $u$ and $w$ into a single vertex is $2$-connected.\label{enum:cutvertex-after-contraction}
\end{enumerate}

Now if $G$ is a $2$-connected graph of tree-width~$2$ and $H$ is a $2$-tree with $G \subseteq H$ and $V(H) = V(G)$, then we have the following properties.
\begin{enumerate}[label = \textbf{(P\arabic*)}, start = 5]
 \item $E_\text{out}(H) \subseteq E(G)$\label{enum:outer-are-in-G}
 \item For every $e \in E_\text{out}(H)$ the graph $G / e$ obtained from $G$ by contracting edge $e$ is $2$-connected.\label{enum:contracting-outer-is-good}
\end{enumerate}

To see~\ref{enum:outer-are-in-G}, consider any edge $e = uw $ in $E_\text{out}(H)$.
As $G$ is $2$-connected, there exists a cycle $C$ in $G$ through $u$ and $w$.
If $e \in E(C)$, then $e \in E(G)$ and we are done.
Otherwise, in $H$, edge $e$ is a chord of cycle $C$, splitting it into two cycles $C_1$ and $C_2$.
As $H$ is a chordal graph, $C_1$ and $C_2$ are triangulated, i.e., $e$ is contained in a triangle with vertices in $C_1$ and another triangle with vertices in $C_2$.
Thus $e \in E_\text{in}(H)$, a contradiction to $e \in E_\text{out}(H)$.

To see~\ref{enum:contracting-outer-is-good}, consider any outer edge $e = uw$ of $H$.
By~\ref{enum:cutvertex-after-contraction} we have that $G / e$ is $2$-connected if $G - \{u,w\}$ is connected.
Assume for the sake of contradiction that $G - \{u,w\}$ is disconnected and let $K_1$, $K_2$ be two connected components of $G - \{u,w\}$.
Then by~\ref{enum:neighbors-in-components} for $i = 1,2$ we have $N(u) \cap V(K_i) \neq \emptyset$ and $N(w) \cap V(K_i) \neq \emptyset$.
Hence we can find a cycle $C$ in $H$ for which $e = uw$ is a chord by going from $u$ to $w$ through $K_1$ and from $w$ to $u$ through $K_2$.
As before, it follows that $e \in E_\text{in}(H)$, a contradiction to $e \in E_\text{out}(H)$.
Hence, $G / e$ is $2$-connected.

\medskip

Finally, let us characterize the edges of $G$ that are not $2$-valid.
An edge $uv$ of $G$ is called a \emph{twin edge} if $N[u] = N[v]$, i.e., if the closed neighborhoods of $u$ and $v$ coincide.
Observe that twin edges are exactly the edges that are not $2$-valid.

\begin{enumerate}[label = \textbf{(P\arabic*)}, start = 7] 
\item If $G$ is $2$-connected, $\tw(G) = 2$, and $xy$ is a twin edge in $G$, then $G$ is a $2$-tree consisting of $r$ triangles, $r \geq 1$, all sharing the common edge $xy$. \label{enum:twins}
\end{enumerate}

To prove~\ref{enum:twins}, let $H$ be a $2$-tree with $G \subseteq H$ and $V(H) = V(G)$.
 Consider the set $S = N(x) - y = N(y) - x$.
 As $G$ is $2$-connected, we have $|S| \geq 1$.
 We claim that for each $w \in S$ the edges $xw$ and $yw$ are outer edges.
 Indeed, if $xw \in E_\text{in}(H)$, then by~\ref{enum:inner-edges-disconnect} the graph $H - \{x,w\}$ and therefore also the graph $G - \{x,w\}$ is disconnected.
 Let $K$ be a connected component of $G - \{x,w\}$ which does not contain $y$.
 By~\ref{enum:neighbors-in-components} we have $N(x) \cap V(K) \neq \emptyset$, as $G$ is $2$-connected.
 This is a contradiction to $N(x) - y = N(y) - x$.
 Thus for every $w \in S$ we have $xw \in E_\text{out}(H)$ and symmetrically $yw \in E_\text{out}(H)$.
 It follows from~\ref{enum:degree-2-vertices} that $\deg_H(w) = 2$ and hence $\deg_G(w) = 2$.
 Thus $V(G) = S \cup \{x,y\}$, as desired.

\subsection{Special decomposition of tree-width 2 graphs}

\begin{theorem}\label{thm:tw2-2connected}
 Let $G = (V,E)$ be a connected non-empty graph of tree-width at most~$2$, different from $C_4$.
 Then there exists a coloring $c: V \to \{1,2,3\}$ such that each of the following holds:
 \begin{enumerate}[label = \textbf{(\arabic*)}]
 \item For each $i \in \{1,2,3\}$ the set $V_i = \{v \in V \mid c(v) \neq i\}$ induces a forest $F_i$ in $G$.\label{enum:induce-forest}
 \item For each $i \in \{1,2,3\}$ there is no $K_1$-component in $F_i$.\label{enum:K1-components}
 \item For each $i \in \{1,2,3\}$ every $K_2$-component of $F_i$ is a twin edge.\label{enum:K2-components}
 \end{enumerate}
\end{theorem}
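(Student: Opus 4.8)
The plan is to prove the statement by induction on $|V(G)|$, constructing $c$ by peeling off vertices along the recursive structure of partial $2$-trees; a finite check handles all graphs on at most five vertices as base cases (these include the few graphs ``near $C_4$'' that the reductions below are arranged never to recreate). For the inductive step, first dispose of two easy situations. If $G$ has a pendant vertex $v$ with neighbour $u$, colour $G-v$ inductively and put $c(v):=c(u)$: a degree-$1$ vertex is isolated in $F_i$ exactly when its unique neighbour lies in colour class $i$, so this choice is both forced and sufficient for~\ref{enum:K1-components}, and attaching $v$ to $u$ can only enlarge an existing component, never create a new $K_2$-component. If $G$ has a cut vertex $x$, write $G=G_1\cup G_2$ with $V(G_1)\cap V(G_2)=\{x\}$, colour each $G_i$ inductively, permute colours on $G_2$ so the two colorings agree at $x$, and take the union; since no edge joins the private vertices of $G_1$ to those of $G_2$, each $F_i$ is a forest and conditions~\ref{enum:K1-components},~\ref{enum:K2-components} are inherited (the only vertex to re-examine is $x$, which was already non-isolated inside each half). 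A $C_4$ occurring as a leaf block is handled ad hoc.

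It remains to treat $G$ that is $2$-connected with $|V(G)|\ge 4$ and $G\neq C_4$. Fix a $2$-tree $H\supseteq G$ on the same vertex set and a simplicial vertex $v$ of $H$ of degree $2$, with neighbours $u,w$. Then $uv,vw\in E_\text{out}(H)\subseteq E(G)$ by~\ref{enum:outer-are-in-G}, $\deg_G(v)=2$, and by~\ref{enum:degree-2-vertices} an outer edge $uw$ would force $H=K_3$ and hence $G=K_3$, so $uw\in E_\text{in}(H)$. If $uw\notin E(G)$, set $G'=(G-v)+uw$, a smaller $2$-connected partial $2$-tree; colour it inductively and put $c(v):=c(u)$, so in each $F_i$ the path $u$–$v$–$w$ merely subdivides the edge $uw$ and all three conditions survive. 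If $uw\in E(G)$, then $G$ is the union of the triangle $uvw$ and $G-v$ along the edge $uw$, with $G-v$ a smaller $2$-connected partial $2$-tree; here one colours $G-v$ so that $c(u)\ne c(w)$ and gives $v$ the third colour, making $uvw$ rainbow. A short argument shows this is forced: for each $i\neq c(v)$, acyclicity of $F_i$ forbids both $c(u)\ne i$ and $c(w)\ne i$, while non-isolation of $v$ in $F_i$ demands $c(u)\ne i$ or $c(w)\ne i$, so exactly one of $u,w$ has colour $i$; hence $\{c(u),c(w)\}$ must equal the pair of colours distinct from $c(v)$.

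The main obstacle is precisely this last point: the triangle reduction needs the coloring of $G-v$ to colour the base edge $uw$ bichromatically. I would resolve it by strengthening the inductive hypothesis, for $2$-connected graphs, to: for every non-twin edge $e$ of $G$ there is a coloring as in the theorem with the endpoints of $e$ receiving distinct colours. This is compatible with the pendant rule, since a non-twin edge of a $2$-connected graph has both endpoints of degree $\ge 2$ and so is never a pendant edge. The extra requirement propagates through the series reduction and the cut-vertex gluing by routine case checking; if the designated edge is incident to the peeled vertex $v$ it is bichromatic automatically once $v$ takes the third colour, and if it equals $uw$ one simply designates $uw$ for $G-v$. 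The one way $uw$ can fail to be non-twin in $G-v$ is when $G-v$ (hence $G$) is a ``book'' of triangles glued on $uw$ — by~\ref{enum:twins} the only $2$-connected tree-width-$2$ graphs possessing a twin edge — and these books, together with the complete bipartite graphs $K_{2,m}$ produced when the series reduction hits a book, form finitely many explicitly colourable families. Throughout, the checks of~\ref{enum:K1-components} and~\ref{enum:K2-components} stay local: a newly introduced vertex always gets a ``safe'' colour, so the only danger, a non-twin edge turning into a $K_2$-component, is excluded because each of its endpoints already had a neighbour of the appropriate colour in the smaller graph.
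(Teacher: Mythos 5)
Your triangle-case analysis (strengthening the induction to make a designated non-twin edge bichromatic) is a reasonable idea, but the series reduction has a genuine gap: the claim that after colouring $G'=(G-v)+uw$ and setting $c(v):=c(u)$ ``all three conditions survive'' fails for condition~\ref{enum:K2-components} whenever $G'$ contains a twin edge incident to $u$ or $w$. Concretely, let $G'$ be the book of three triangles with spine $wy$ and page vertices $u,p,q$, and let $G$ be obtained by subdividing the page edge $uw$ with the new vertex $v$ (so $G$ is $2$-connected, has tree-width $2$, six vertices, and $uw\notin E(G)$). One checks that \emph{every} coloring of $G'$ satisfying \ref{enum:induce-forest}--\ref{enum:K2-components} must give $w$ and $y$ distinct colours and all page vertices the third colour, so the spine $wy$ is forced to be a $K_2$-component of one forest; this is legal in $G'$ because $wy$ is a twin edge there. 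After your lift, $wy$ is still a $K_2$-component of the corresponding forest of $G$, but it is no longer a twin edge of $G$ (the subdivision vertex $v$ is adjacent to $w$ and not to $y$), so \ref{enum:K2-components} is violated, and no choice of designated edge for $G'$ can repair this since the bad $K_2$-component is forced. Your escape hatch of ``explicitly colourable families'' does not catch this graph: it is neither a book glued on $uw$ nor a $K_{2,m}$, because the twin edge of $G'$ is the spine $wy$, not the added edge $uw$. A second, smaller slip: in the series case your rule $c(v):=c(u)$ makes a designated edge $uv$ monochromatic, so the strengthened hypothesis is not propagated by the rule as stated (your ``third colour'' remark only applies to the triangle case).

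The paper avoids exactly this trap by branching on whether the reduced graph has a twin edge at all: it contracts an outer edge of the $2$-tree that lies in no triangle of $G$, and if the contracted graph has a twin edge it invokes \ref{enum:twins} to conclude the contracted graph is a book and colours the original graph $G$ explicitly (these are precisely the subdivided-book configurations above); only when the contracted graph is twin-edge-free does it lift an inductive coloring, and then no $K_2$-components exist to worry about. When no such contraction is available it does not recurse at a simplicial degree-$2$ vertex at all, but takes a proper $3$-colouring of the $2$-tree and verifies \ref{enum:K1-components} and \ref{enum:K2-components} via a ``trail around a vertex'' argument. To salvage your scheme you would need, at minimum, to treat the case ``$(G-v)+uw$ (respectively $G-v$) has a twin edge'' as a separate explicit family, covering books with one page edge subdivided and not just books over $uw$ and $K_{2,m}$, and to fix the propagation of the designated edge through the series reduction.
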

\begin{proof}
 We call a coloring $c: V \to \{1,2,3\}$ \emph{good} if it satisfies~\ref{enum:induce-forest}--\ref{enum:K2-components}
 We shall prove the existence of a good coloring by induction on $|V|$, the number of vertices in $G$. 
 We distinguish the cases whether $G$ is $2$-connected or not.
 
 \paragraph{Case~1: $G$ is not $2$-connected.}
 If $G$ is a single edge $uv$, then a desired coloring is given by $c(u) = c(v) = 1$.
 Otherwise $G$ has at least two blocks.
 Consider a leaf block $B$ in the block-cutvertex-tree of $G$ (see~\cite{West} for a definition) and the unique cut vertex $v$ of $G$ in this block.
 Consider the graphs $G_1 = B$ and $G_2 = G - (B - v)$, see Figure~\ref{fig:cut-vertex}.
 We define colorings $c_1$ and $c_2$ for $G_1$ and $G_2$, respectively, as follows.
 For $i \in \{1,2\}$, if $G_i \neq C_4$, then we apply induction to $G_i$ and obtain a coloring $c_i$ of $G_i$ satisfying~\ref{enum:induce-forest}--\ref{enum:K2-components}.
 On the other hand, if $G_i = C_4$, then we take the coloring $c_i$ shown in the left of Figure~\ref{fig:small-colorings}, in which the cut vertex $v$ is incident to the only $K_2$-component.
 Note that this coloring satisfies~\ref{enum:induce-forest} and~\ref{enum:K1-components}.

 \begin{figure}[htb]
 \centering
 \includegraphics{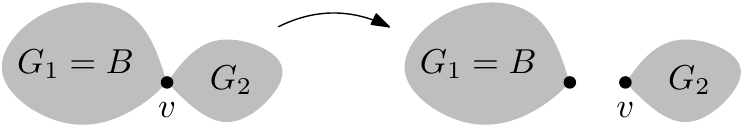}
 \caption{Splitting at cut vertex $v$.}
 \label{fig:cut-vertex}
 \end{figure}
 
 Without loss of generality (by permuting the colors if necessary) we have $c_1(v) = c_2(v) = 1$ and hence $c_1$ and $c_2$ can be combined into a coloring $c$ of $G$ by setting $c(x) = c_i(x)$ whenever $x \in V(G_i)$, $i=1,2$.
 Clearly, this coloring $c$ satisfies~\ref{enum:induce-forest} and~\ref{enum:K1-components}.
 
 If $xy$ is a $K_2$-component of $F_i$ in $G$ for some $i\in \{1,2,3\}$, with $v \neq x,y$, then $xy$ is also a $K_2$-component of the corresponding forest in $G_1$ or $G_2$, say in $G_1$.
 In particular, $G_1 \neq C_4$, since $v \neq x$, $y$.
 So $c_1$ satisfies~\ref{enum:K2-components} and $xy$ is a twin edge in $G_1$ and thus also in $G$, as desired.
 On the other hand, if $xy$ is an edge of $F_i$ in $G$ for some $i \in \{1,2,3\}$, say with $x = v$ and $y \in V(G_1)$, then $v$ is incident to another edge of $F_i$ in $G_2$, since $c_2$ satisfies~\ref{enum:K1-components}.
 Thus $xy$ is not a $K_2$-component of $F_i$.

 In any case, $c$ satisfies~\ref{enum:K2-components} and hence $c$ is a good coloring.

 \paragraph{Case~2: $G$ is $2$-connected.}

 \begin{figure}[htb]
 \centering
 \includegraphics{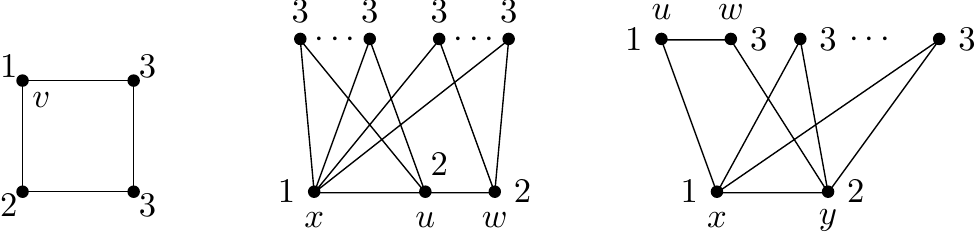}
 \caption{Left: A coloring of $C_4$ satisfying~\ref{enum:induce-forest} and~\ref{enum:K1-components} and with one $K_2$-component (in $F_3$). Middle and right: Good colorings when the graph obtained by contracting edge $uw$ has a twin edge.}
 \label{fig:small-colorings}
 \end{figure}
 
 Fix a $2$-tree $H$ with $G \subseteq H$ and $V(H) = V(G)$.
 Recall that by~\ref{enum:outer-are-in-G} we have $E_\text{out}(H) \subseteq E(G)$, i.e., every outer edge is an edge of $G$.
 An outer edge $e$ is called \emph{contractible} if $e$ is in no triangle of $G$.
 If $G$ has a contractible edge $e$, then we shall consider the smaller graph $G / e$ obtained by contracting $e$.
 If $G / e$ has a twin edge, we shall give a good coloring $c$ of $G$ directly, otherwise we obtain a good coloring $c$ by induction.
 On the other hand, if $G$ has no contractible edges, we shall give a good coloring $c$ directly.
 
 \paragraph{Case~2A: There exists a contractible edge $e$ in $G$.}
 Let $e = uw$ be contractible.
 Consider the graphs $G' = G / e$ and $H' = H / e$ obtained from $G$ and $H$ by contracting edge $e$ into a single vertex $v$.
 As $e \in E_\text{out}(H)$ and $H \neq K_3$ (otherwise $e$ would be in a triangle of $G$), we have that $H'$ is a $2$-tree.
 In particular, $\tw(G') \leq 2$.
 Moreover, by~\ref{enum:contracting-outer-is-good} $G'$ is also $2$-connected.
 Finally, as $e$ is not in a triangle in $G$, we have that $|E(G)|=|E(G') \cup \{e\}|$.
 
 If $G' = C_4$, then $G = C_5$ and it is easy to check that coloring the vertices around the cycle by $1,1,2,2,3$ gives a coloring $c$ satisfying~\ref{enum:induce-forest},~\ref{enum:K1-components} and~\ref{enum:K2-components}.

 If $G'$ has a twin edge $xy$, then by~\ref{enum:twins} we have that $G' = H'$ and $G'$ consists of $r$ triangles, $r \geq 1$, all sharing the common edge $xy$.
 Since the contractible edge $e$ lies in no triangle of $G$ and $G \neq C_4$, we have that $G'$ is not a triangle and thus in fact $r \geq 2$.
 Let $S = V(G') - \{x,y\}$.
 
 Now if $v = y$ (the case $v = x$ being symmetric), then $G$ looks like in Figure~\ref{fig:small-colorings} (middle) and a good coloring of $G$ is given by $c(x) = 1$, $c(u) = c(w) = 2$ and $c(z) = 3$ for every $z \in S$.
 On the other hand, if $v \in S$, then without loss of generality $ux \in E(G)$ and $wy \in E(G)$, and $G$ looks like in Figure~\ref{fig:small-colorings} (right side).
 A good coloring of $G$ is given by $c(x) = c(u) = 1$, $c(y) = 2$, and $c(z) = 3$ for every $z \in (S \cup w) - v$.
 In both cases it is easy to check that $c$ satisfies~\ref{enum:induce-forest},~\ref{enum:K1-components} and~\ref{enum:K2-components}.

 \begin{figure}[htb]
 \centering
 \includegraphics{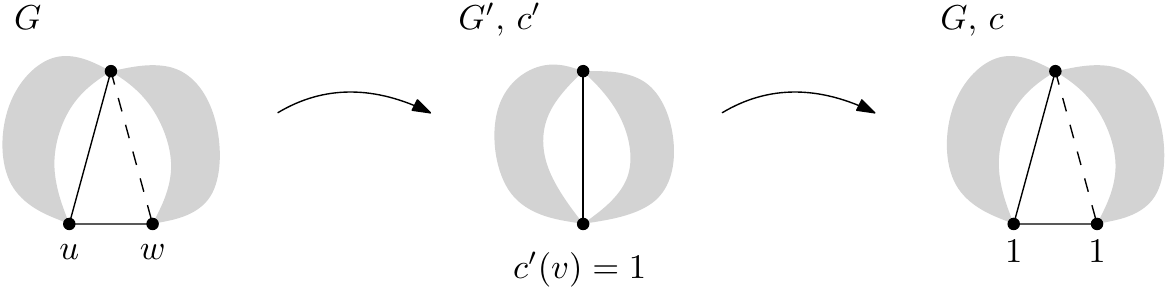}
 \caption{An example of the case that $G$ has a contractible edge $e = uw$. Grey areas indicate the remainder of the graph.}
 \label{fig:outerplanar-case-1}
 \end{figure}
 
 So finally we may assume that $G' \neq C_4$ and $G'$ has no twin edge.
 Applying induction to $G'$, we obtain a good coloring $c'$ of $V(G') = V - \{u,w\} + v$ with corresponding induced forests $F'_1$, $F'_2$ and $F'_3$ in $G'$.
 We define a coloring $c: V \to \{1,2,3\}$ by $c(x) = c'(x)$ for each $x \in V' - v$ and $c(u) = c(w) = c'(v)$, see Figure~\ref{fig:outerplanar-case-1}.
 
 Say $c'(v) = 1$.
 Now $c$ satisfies~\ref{enum:induce-forest} as $F_1 = F'_1$, and $F'_2$, $F'_3$ are obtained from $F_2$, $F_3$, respectively, by contracting the edge $uw$, that is not in a triangle in $G$.
 Moreover, $F'_i$ has no $K_1$ or $K_2$-components as $G'$ has no twin edge and~\ref{enum:K1-components},\ref{enum:K2-components} holds for $G'$.
 Thus $F_i$ has no $K_1$ or $K_2$-components, which shows that $c$ is a good coloring.

 \paragraph{Case~2B: There are no contractible edges in $G$.}
 In this case we define the coloring $c:V \to \{1,2,3\}$ to be 
 some proper $3$-coloring of $H$.
 (As mentioned in Section~\ref{sec:preliminaries}, such can be easily found through the construction sequence of $H$.)
 To prove that $c$ satisfies~\ref{enum:induce-forest}, assume for the sake of contradiction that there is a cycle in $F_i$ for some $i \in \{1,2,3\}$, i.e., a cycle using the two colors in $\{1,2,3\} - \{i\}$.
 Since $H$ is chordal, a shortest such cycle would be a $2$-colored triangle, which contradicts $c$ being a proper coloring.
 
 Next we prove that the coloring $c$ satisfies~\ref{enum:K1-components} and~\ref{enum:K2-components}.
 For any vertex $x \in V$, a \emph{trail around $x$} is defined to be a sequence $s_1,\ldots,s_r$ of $r$ distinct neighbors of $x$ in $H$, $r \geq 2$, such that $x,s_i,s_{i+1}$ form a triangle in $H$ for $i=1,\ldots,r-1$ and $xs_r \in E_\text{out}(H)$.
 Note that $xs_i \in E_\text{in}(H)$ for $i=2,\ldots,r-2$.
 Moreover, for any triangle $x,y,z$ in $H$ there is a trail around $x$ whose first elements are $s_1 = y$ and $s_2 = z$.
 Indeed, let $s_1 = y,s_2 = z,s_3,\ldots,s_r$ be an inclusion-maximal sequence of distinct neighbors of $x$ for which $x,s_i,s_{i+1}$ form a triangle in $H$ for $i=1,\ldots,r-1$.
 If $xs_r \in E_\text{out}(H)$, this is a trail as desired.
 Otherwise $xs_r \in E_\text{in}(H)$ and $x$, $s_r$ have another common neighbor $s_{r+1}$ in $H$ different from $s_{r-1}$.
 Moreover, $s_{r+1} \neq s_j$ for $j = 1,\ldots,r-2$, as otherwise the subgraph of $H$ induced by $\{x,s_j,\ldots,s_r\}$ would contain a subdivision of $K_4$, a contradiction to $\tw(H) = 2$.
 See Figure~\ref{fig:trails}.
 
 \begin{figure}[htb]
 \centering
 \includegraphics{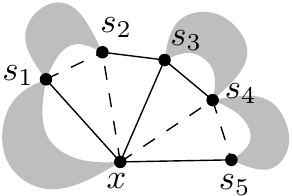}
 \caption{An example of a trail $s_1,\ldots,s_5$ around $x$.}
 \label{fig:trails}
 \end{figure}

 Now we shall show that $c$ satisfies~\ref{enum:K1-components} by proving that any vertex $x$ of $G$, say $c(x)=1$, has a neighbor in $G$ of color~$2$ and a neighbor in $G$ of color~$3$.
 As $G$ is connected, $x$ is adjacent to some vertex $y$, say $c(y) = 2$.
 The edge $xy$ is in a triangle in $H$ and its third vertex $z$ has color~$3$.
 Consider a trail $s_1,\ldots,s_r$ around $x$ starting with $s_1 = y$, $s_2 = z$.
 Note that $c(s_i) = 2$ if $i$ is odd and $c(s_i) = 3$ if $i$ is even.
 Hence, if $r$ is even, then as $xs_r \in E_\text{out}(H) \subseteq E(G)$, we have that $s_r$ is a neighbor of $x$ of color~$3$, as desired.
 
 Otherwise $r$ is odd, $r \geq 3$, and $xs_{r-1} \notin E(G)$.
 In particular $xs_r \in E_\text{out}(H)$ is in only one triangle of $H$, namely $x,s_{r-1},s_r$, and in no triangle of $G$.
 Hence $xs_r$ is contractible, contradicting the assumptions of \textbf{Case~2B}.
 This shows that $c$ satisfies~\ref{enum:K1-components}.
 
 Finally, to show that $c$ satisfies~\ref{enum:K2-components}, consider any edge $xy$ of $G$, say $c(x) = 1$ and $c(y) = 2$.
 If every trail around $x$ starting with $s_1 = y$ and every trail around $y$ starting with $s_1 = x$ has length $r=2$, then $xy$ is a twin edge.
 Otherwise, consider a longer such trail, say $s_1,\ldots,s_r$ is a trail around $x$ with $s_1 = y$ and $r \geq 3$.
 As before, note that $xs_r \in E_\text{out}(H)$ is an edge in $G$ and $c(s_i) = 2$ if $i$ is odd and $c(s_i) = 3$ if $i$ is even.
 If $xs_i \in E(G)$ for some odd $i \geq 3$, we are done.
 Otherwise $r$ is even, and $xs_{r-1} \notin E(G)$.
 As before, it follows that $xs_r$ is contractible, contradicting the assumptions of \textbf{Case~2B}.
 Hence $c$ also satisfies~\ref{enum:K2-components}, which completes the proof.
\end{proof}

 \subsection{Proof of Theorem~\ref{thm:tw3}}
 Let $G$ have tree-width $t$, then $G \subseteq H$ for some $t$-tree $H$.
 Then, as mentioned in Section~\ref{sec:preliminaries}, $\chi(H) =t+1$. 
 Consider a proper coloring of $H$ and assume that there is a cycle using two colors. Let $C$ be the shortest such cycle.
 Since $H$ is chordal, $C$ is a triangle. This is impossible since there are no $2$-colored triangles in a proper coloring.
Thus $ \chi_{\rm acyc}(H) = t+1$ and therefore $\chi_{\rm acyc}(G) \leq t+1$.
Theorem~\ref{thm:monotonicity}(i) immediately implies that $f_1(G) \leq \binom{t+1}{2}$.

\medskip

Next we shall consider $f_2(G)$, where $G$ is a graph of tree-width $t$. If $t=2$ and $G=C_4$, we see that each edge in $G$ is $2$-valid and two edge disjoint paths on $2$ edges each form two induced forests covering all the edges, so $f_2(C_4)=2$. If $t=2$ and $G\neq C_4$ is connected, then $f_2(G)\leq 3 = 3\binom{t+1}{3}$ by Theorem~\ref{thm:tw2-2connected}.
If $t=2$ and $G$ is not connected, then each component $G'$ of $G$ has tree-width at most $2$ and thus satisfies $f_2(G')\leq 3$ as argued above.
Picking one $2$-strong forest from each component of $G$ and taking their union yields a $2$-strong forest of $G$ and hence $f_2(G)\leq 3$.
 
Now, let $t\geq 3$. Given a graph $G$ of tree-width $t\geq 3$, let $H$ be a $t$-tree that contains $G$.
 It is well-known~\cite{DDO04}, that any proper $(t+1)$-coloring of $H$ has the property that any set of $p+1$ colors, $p = 1,\ldots,t$, induces a $p$-tree.
 We hence have a $(t+1)$-coloring of $G$ such that each of the $x= \binom{t+1}{3}$ sets of $3$ colors induces a graph of tree-width at most $2$.
 Call these graphs $G_1, \ldots, G_{x}$. As each $2$-valid edge has a witness tree induced by $3$ vertices, each witness tree is contained in $G_i$, for some $i\in\{1, \ldots, x\}$. 
 So each $2$-valid edge is $2$-valid in some $G_i$. 
 Since $\tw(G_i) \leq 2$, $f_2(G_i)\leq 3$, and so the $2$-valid edges of $G_i$ can be covered by $3$ $2$-strong forests, $i=1, \ldots, x$.
 Hence the $2$-valid edges of all $G_i$'s and thus the $2$-valid edges of $G$ can be covered with $3x = 3 \binom{t+1}{3}$ $2$-strong forests.
 \qed


\section{Minor-closed classes of graphs with bounded acyclic chromatic number}\label{sec:minor-closed}

\begin{lemma}\label{uncontract}
 Let $F$ be a graph and let $M$ be a matching in $F$.
 Let $F_M$ be the graph obtained by contracting the edges of $M$.
 \begin{itemize}
 \item If $F_M$ is a forest, then $\tw(F)\leq 3$. Moreover, if $M$ is an induced matching, then $\tw(F)\leq 2$. 
 
 \item Let $c$ be an acyclic coloring of $F_M$ with colors $1, \ldots, m$. 
 If $e$ is a $2$-valid edge of $F$ contained in $M$, then $e$ is $2$-valid in some subgraph $F_{a,b}$ of $F$, where $F_{a,b}$ is obtained by ``uncontracting" the subgraph of $F_M$ induced by colors $a$ and $b$, $a, b \in \{1, \ldots, m\}$.
 \end{itemize}
\end{lemma}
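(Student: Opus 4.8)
The plan is to treat the two bullets in turn, on the common footing given by the contraction map $\phi\colon V(F)\to V(F_M)$: every fiber $\phi^{-1}(u)$ has size $1$ or $2$, the latter exactly for the $u$ that are images of edges of $M$, which I call the \emph{branch vertices} and whose set I denote $S$.

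For the first bullet I would start from a particularly clean width-$1$ tree decomposition of the forest $F_M$: root each of its components and take the decomposition whose underlying tree is a copy of $F_M$, placing at the node $u$ the bag $\{u,\mathrm{parent}(u)\}$ (just $\{u\}$ at a root). Every bag is then a single vertex or a single edge of $F_M$. Now I would ``uncontract'' this decomposition by replacing each occurrence of a vertex $u$ in a bag with the whole fiber $\phi^{-1}(u)$, keeping the same underlying tree, and check that the result is a tree decomposition of $F$: the nodes whose bag contains a fixed $z\in V(F)$ are exactly those whose original bag contained $\phi(z)$, so they induce a subtree; and for an edge $ab$ of $F$ either $ab\in M$ and $\phi(a)=\phi(b)$ lies in some bag, or $ab\notin M$, in which case $\phi(a)\ne\phi(b)$ (two distinct vertices with equal image would be joined by a matching edge) and $\phi(a)\phi(b)\in E(F_M)$, so some bag contains both. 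Since fibers have size $\le 2$, every bag grows to size $\le 4$, so $\tw(F)\le 3$. If moreover $M$ is an induced matching, then no edge of $F$ joins endpoints of two distinct edges of $M$, so $S$ is independent in $F_M$; hence each edge-bag $\{u,\mathrm{parent}(u)\}$ contains at most one branch vertex, uncontraction enlarges every bag by at most $1$, every bag has size $\le 3$, and $\tw(F)\le 2$.

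For the second bullet, suppose $e=xy\in M$ is $2$-valid in $F$, so $e$ lies in an induced tree of $F$ with exactly two edges; this tree is an induced $P_3$ on $\{x,y,z\}$ for some $z$ adjacent in $F$ to exactly one of $x$ and $y$. Set $v=\phi(x)=\phi(y)$ and $w=\phi(z)$. As $x,y,z$ are distinct and $x,y$ are matched to each other, $z$ is matched (if at all) to some other vertex, so $v\ne w$; and since $z$ is adjacent in $F$ to $x$ or to $y$ while $\phi(x)=\phi(y)=v$, we get $vw\in E(F_M)$. Let $a=c(v)$ and $b=c(w)$; since $c$ is a proper coloring of $F_M$ and $vw$ is an edge, $a\ne b$. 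Then $F_{a,b}$, the graph obtained by uncontracting the induced subgraph $F_M[V_a\cup V_b]$ of $F_M$ (with $V_a,V_b$ the corresponding color classes), is an induced subgraph of $F$ containing $x$ and $y$ (as $\phi(x)=\phi(y)=v$ has color $a$) and $z$ (as $\phi(z)=w$ has color $b$). Hence the same induced $P_3$ on $\{x,y,z\}$ lives in $F_{a,b}$, so $e$ is $2$-valid in $F_{a,b}$.

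The only point that needs care is the bag-size accounting in the first bullet: a general width-$1$ decomposition of $F_M$ could put two branch vertices into one bag and so only give $\tw(F)\le 3$ even when $M$ is induced, whereas the rooted decomposition above has every bag a vertex or an edge, so that independence of $S$ forces every uncontracted bag down to size $\le 3$. I also note that acyclicity of $c$ is not used in the second bullet beyond properness; it is relevant only in that it makes $F_M[V_a\cup V_b]$ a forest, which is what lets the first bullet be applied to $F_{a,b}$ afterwards.
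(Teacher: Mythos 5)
Your proof is correct and follows essentially the same route as the paper: both bullets are handled by lifting a width-$1$ tree decomposition of the forest $F_M$ to $F$ by replacing each contracted vertex with its two preimages (bags of size at most $4$ in general, at most $3$ when $M$ is induced since contracted vertices are then pairwise non-adjacent in $F_M$), and by noting that the three vertices of a witness tree of $e\in M$ land in two color classes of $F_M$, so the same induced $P_3$ survives in the induced subgraph $F_{a,b}$. Your explicit rooted edge-bag decomposition of $F_M$ is only a cosmetic variant of the paper's use of an abstract width-$1$ decomposition of a tree containing $F_M$.
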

\begin{proof}
 To prove the first item assume that $F_M$ is a forest and let $S \subseteq V(F_M)$ be the set of vertices each corresponding to one contracted edge from $M$.
 Let $T_M$ be a tree that contains $F_M$ as an induced subgraph.
 In particular we have $\tw(F_M) \leq \tw(T_M) = 1$.
 Let $(T,B)$ be a tree decomposition of $T_M$ of width~$1$, that is, $T$ is a tree and for each vertex $v$ of $T_M$ there is a set $B_v \in B$, $B_v \subseteq V(T)$, such that $B_v$ induces a subtree of $T$ and $B_u \cap B_v \neq \emptyset$ if and only if $uv$ is an edge of $T_M$.
 Note that the ''only if`` follows as $T_M$ is a tree.
 
 We define a tree decomposition $(T',B')$ of $F$ as follows.
 Set $T' = T$ and $B'_v = B_v$ for every vertex $v \in V(F) - V(M)$.
 For an edge $uv \in M$ with corresponding vertex $w \in S$ we set $B'_u = B'_v = B_w$.
 Finally, set $B' = \{B'_v \mid v \in V(F)\}$.
 As $(T,B)$ has width~$1$, for any vertex $x$ in the tree $T$ we have $|\{v \in V(F_M) \mid x \in B_v\}| \leq 2$.
 As $M$ is a matching, every vertex in $F_M$ corresponds to at most two vertices in $F$ and we have $|\{v \in V(F) \mid x \in B'_v\}|\leq 4$.
 Moreover, if $M$ is an induced matching, then every pair of adjacent vertices in $F_M$ (and hence also $T_M$) corresponds to at most three vertices in $F$.
 This implies that $|\{v \in V(F) \mid x \in B'_v\}|\leq 3$, as for any $u,v \in V(F)$ we have $B_u \cap B_v \neq \emptyset$ only if $uv$ is an edge of $F_M$.

 \medskip
 
 To see the second item of the lemma, consider a witness tree of $e=xy$ with vertices $x,y,z$. Then $x$ and $y$ got contracted to a vertex of color, say $a$, in $F_M$ and $z$ either got contracted or stayed as it is and got some color $b$ under coloring $c$ of $F_M$. 
 So, $x, y, z \in V(F_{a,b})$. 
 Since $x,y,$ and $z$ induce a tree in $F$, they induce a tree in $F_{a,b}$ since $F_{a,b}$ is an induced subgraph of $F$.
\end{proof}

\subsection{Proof of Theorem~\ref{thm:acyclic}}
 
Given a graph $G\in\cC$, consider an acyclic coloring $c$ of $G$ with $x$ colors.
 For each of the $\binom{x}{2}$ many pairs of colors $\{i,j\}$, $i\neq j$, we split the forest induced by these colors into the $2$-strong forest $F_{i,j}$, and the induced matching $M_{i,j}$, which respectively gather the components with at least two edges and the ones with only one edge. Each edge of $G$ belongs to either $F_{i,j}$ or $M_{i,j}$ for some $i$, $j$. 
 Let $E$ be the set of edges that do not belong to any of $F_{i,j}$'s. Thus each $e\in E$ is in $M_{i,j}$, for some $i,j$. We see that the $\binom{x}{2}$ $2$-strong forests $F_{i,j}$ cover all $2$-valid edges of $G$ that are not in $E$.
 Next, we shall show two different approaches how to cover the $2$-valid edges of $G$ that are in $E$.

 Consider fixed $i$, $j$, $1\leq i<j\leq x$, and let $M=M_{i,j}$. 
 Let $G_M$ be the graph obtained from contracting the edges of $M$ in $G$.
 Then $G_M$ is again in the class $\cC$ and thus has acyclic chromatic number at most $x$. 
 Consider an acyclic coloring $c'$ of $G_M$ and the graph $H_{a,b}$ induced by two distinct color classes $a$ and $b$ in $G_M$.
 Consider $G_{a,b} = G_{a,b}(M)$, the graph obtained from $H_{a,b}$ by uncontracting $M$.
 Then, since $H_{a,b}$ is an induced subgraph of $G_M$, $G_{a,b}$ is an induced subgraph of $G$ and $H_{a,b}$ is obtained from $G_{a,b}$ by contracting the edges of $M$ in $G_{a,b}$.
 Thus, since $H_{a,b}$ is a forest and $M\cap E(G_{a,b})$ is an induced matching in $G_{a,b}$, by Lemma~\ref{uncontract} applied with $F=G_{a,b}$, we have $\tw(G_{a,b}) \leq 2$. Thus by Theorem~\ref{thm:tw3}, the $2$-valid edges of $G_{a,b}$ are covered by three $2$-strong forests. 
 By the second item of Lemma~\ref{uncontract}, applied with $F=G$, each $2$-valid edge of $G$ that is in $M$ is $2$-valid in some $G_{a,b}$.
 Each $2$-valid edge of $G$ from $E$ belongs to some matching $M= M_{i,j}$ and thus is $2$-valid in some $G_{a,b}(M)$. 
There are altogether $\binom{x}{2}$ such $M$'s and for each at most $\binom{x}{2}$ graphs $G_{a,b}(M)$, each contributing three covering forests. 
We see that all $2$-valid edges of $G$ from $E$ are covered by at most $3 \binom{x}{2} \binom{x}{2}$ $2$-strong forests in $G$.

 To see another way to deal with the edges in $E$ consider the subgraph $G'$ of $G$ formed by these edges.
 Vizing's theorem states (see e.g.~\cite{West}) that the edge set of any graph of maximum degree $D$ can be decomposed into at most $D+1$ matchings.
 Since each vertex of color $i$ under coloring $c$ is incident 
 to at most one vertex in each $M_{i,j}$, $ 1\leq j\leq x$, $j\neq i$, the maximum degree of $G'$ is at most $x-1$.
 Therefore, by Vizing's theorem, the edge set of $G'$ can be decomposed into at most $x$ matchings.
 Let $M$ be one such a matching. Let $G_M$ be obtained from $G$ by contracting $M$. Again, $G_M\in\cC$. Let $c'$ be an acyclic coloring of $G_M$ with at most $x$ colors and let $H_{a,b}$ be the induced forest formed by some color classes $a$ and $b$. Further, let $G_{a,b}$ be a graph obtained by uncontracting $M$ in $H_{a,b}$. 
 By Lemma~\ref{uncontract} applied with $F=G_{a,b}$, $\tw(G_{a,b})\leq 3$. Thus by Theorem~\ref{thm:tw3}, the $2$-valid edges of $G_{a,b}$, can be covered by twelve $2$-strong forests. By the second item of Lemma~\ref{uncontract} applied with $F=G$, each $2$-valid edge of $G$ that is in $M$ is $2$-valid in some $G_{a,b}$. Therefore, all $2$-valid edges of $G$ from $E$ are covered by at most $12x \binom{x}{2}$ $2$-strong forests.

 So, the $2$-valid edges from $E$ are covered by at most $\min\{ 12x \binom{x}{2}, \binom{x}{2}3\binom{x}{2}\}$  $2$-strong forests.
 Recall that the remaining $2$-valid edges that are in $F_{i,j}$s are covered by at most $\binom{x}{2}$ $2$-strong forests. 
 Finally, using Borodin's result that each planar graph has acyclic chromatic number at most~$x = 5$~\cite{Bor79}, the first bound implies that for every planar graph $G$ we have $f_2(G)\leq \binom{5}{2}(3\binom{5}{2}+1) = 310$.
 The theorem follows. 
 \qed

\section{Graphs of bounded tree-depth}\label{sec:tree-depth}

Recall that $G$ has tree-depth at most $d$ if and only if there exists a rooted forest $F$ of depth $d$ such that $G$ is a subgraph of the closure of $F$.
When $F$ consists of only one tree and $V(G)=V(F)$ we call such a tree an \emph{underlying tree} of $G$.
In particular any connected graph of tree-depth at most $d$ has an underlying tree of depth at most $d$.
Note that also disconnected graphs can have such an underlying tree.
For example when $G$ admits an underlying tree of depth $d$, then any graph obtained from $G$ by adding isolated vertices also admits an underlying tree of depth $d$.
Let $\TD(d)$ denote the set of all graphs of tree-depth at most $d$ having an underlying tree of depth $d$ and let $\TD^\star(d) \subseteq \TD(d)$ be the set of all graphs $G$ in $\TD(d)$ some of whose underlying trees of depth $d$ have the root of degree $1$.
When we talk about a graph $G$ of tree-depth at most $d$, we usually associate a fixed underlying tree $T$ with root $r$ to it.
Let $f_k(d) = \sup \{f_k(G)\mid \td(G) \leq d\}$.
We shall inductively show that this function is bounded by $(2k)^d$ from above and hence $\sup$ could be replaced by $\max$ in the definition of $f_k(d)$.


\begin{lemma}\label{lem:cut-depth}
 Let $G\in \TD^{\star}(d)$ with underlying tree $T$ of depth at most $d$ having a root $r$ of degree $1$ that is adjacent to a vertex $x$ in $T$.
 Then $G-r, G-x \in \TD(d-1)$.
\end{lemma}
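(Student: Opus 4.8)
The plan is to exhibit the required underlying trees directly; no induction is needed here. Throughout I use that for a rooted tree $T$ with root $r$, the closure of $T$ contains the edge $uv$ exactly when one of $u,v$ is an ancestor of the other in $T$. By hypothesis $T$ is a rooted tree with $V(T)=V(G)$ and $G$ a subgraph of its closure, $\operatorname{depth}(T)\le d$, the root $r$ has degree $1$ in $T$, and $x$ is the unique $T$-neighbour of $r$. For $G-r$ the argument is immediate: since $\deg_T(r)=1$, the graph $T-r$ is again a tree, which we root at $x$; as every root-to-vertex path of $T$ begins with the edge $rx$, the depth of $T-r$ is $\operatorname{depth}(T)-1\le d-1$, and $V(T-r)=V(G-r)$. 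Because the ancestor relation on $V(G)\setminus\{r\}$ is the same in $T$ and in $T-r$, the closure of $T-r$ is exactly the closure of $T$ with $r$ removed, so $G-r$ is a subgraph of it and hence $G-r\in\TD(d-1)$.

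For $G-x$ I would build a new underlying tree $T'$ by lifting the subtrees hanging below $x$. Let $y_1,\dots,y_m$ be the children of $x$ in $T$ and let $T_{y_i}$ denote the subtree of $T$ rooted at $y_i$. Form $T'$ by taking $r$ as the root, making $y_1,\dots,y_m$ its children, and attaching each $T_{y_i}$ unchanged below $y_i$. Since $r$'s only child in $T$ is $x$, the sets $\{r\},\{x\},V(T_{y_1}),\dots,V(T_{y_m})$ partition $V(T)$, so $V(T')=\{r\}\cup\bigcup_i V(T_{y_i})=V(T)\setminus\{x\}=V(G-x)$. Each $y_i$ sits at level $3$ in $T$, so $T_{y_i}$ has depth at most $d-2$; placing it one level below the new root $r$ therefore gives $\operatorname{depth}(T')\le d-1$ (and in fact exactly $d-1$, since a longest root path $r,x,y_i,\dots$ of $T$ forces some $T_{y_i}$ to have depth $d-2$, in case one wants the depth bound to be tight).

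It remains to verify that $G-x$ is a subgraph of the closure of $T'$, which is the only step requiring care. Take an edge $uv$ of $G-x$ and, using the ancestor characterisation of the closure of $T$, assume without loss of generality that $v$ is an ancestor of $u$ in $T$. If $v=r$, then $v$ is the root of $T'$ and hence an ancestor of $u$ in $T'$. Otherwise $u,v\notin\{r,x\}$, so $u$ lies in some $T_{y_i}$; the ancestors of $u$ in $T$ are the ancestors of $u$ inside $T_{y_i}$ together with $x$ and $r$, so $v$ must already be an ancestor of $u$ inside $T_{y_i}$, and this relation is unchanged in $T'$. In either case $uv$ is an edge of the closure of $T'$, so $G-x$ is a subgraph of that closure and $G-x\in\TD(d-1)$. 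The main obstacle is precisely this last point — checking that ancestors remain ancestors after reattaching the subtrees and keeping the depth bookkeeping consistent; the rest is routine.
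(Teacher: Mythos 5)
Your proof is correct and is in substance the paper's own argument: for $G-r$ the paper also takes $T-r$ rooted at $x$, and for $G-x$ the paper deletes $r$ and renames $x$ as $r$, which is exactly your tree $T'$ with root $r$ and the subtrees of the children of $x$ reattached one level higher. You simply spell out the depth bookkeeping and the closure/ancestor verification that the paper leaves as an observation.
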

\begin{proof}
 It suffices to observe that $G-r$, $G-x$ are graphs of tree-depth at most $d-1$ with underlying trees obtained 
 by removing $r$ from $T$, or removing $r$ in $T$ and renaming $x$ with $r$, respectively.
The roots of these trees are $x$ and $r$, respectively.
\end{proof}

An edge $e$ is \emph{almost $k$-valid} in a graph $G \in \TD(d)$ with associated root $r$ if it is not $k$-valid in $G$ but there is an induced path in $G$ containing $r$ and $e$, i.e., both endpoints of $e$.
Note that, for example, if $e = xy$ and $xr, yr \in E(G)$, then there is no such induced path.
Let $g_k(d)$ and $g^\star_k(d)$ be the the maximum number of almost $k$-valid edges in a graph $G\in\TD(d)$, respectively $G\in\TD^\star(d)$.

\begin{lemma}\label{lem:rootPaths}
 For all positive integers $k$, $d$, with $d\geq 2$, we have $g_k(d) \leq (2k)^{d-1}-1$ and $g^\star_k(d)\leq 2 (2k)^{d-2}-1$.
\end{lemma}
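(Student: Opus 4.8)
The plan is to prove both inequalities simultaneously by induction on $d$, via two interlocking estimates valid for $d\geq 2$, namely $g^\star_k(d)\leq 2g_k(d-1)+1$ and $g_k(d)\leq (k-1)g^\star_k(d)$, together with the trivial base fact $g_k(1)=0$ (graphs in $\TD(1)$ have no edges). The structural observation behind both estimates is this: if $e$ is almost $k$-valid in $G$ then every induced path through $e$ -- in particular any witness path through the associated root $r$ and through $e$ -- has at most $k-1$ edges, since otherwise a subpath consisting of $k$ consecutive edges including $e$ would be an induced tree certifying that $e$ is $k$-valid.

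For $g^\star_k(d)\leq 2g_k(d-1)+1$, fix $G\in\TD^\star(d)$ with underlying tree $T$ of depth at most $d$, root $r$ of degree $1$, and $r$ adjacent to $x$ in $T$. By Lemma~\ref{lem:cut-depth}, $G-r\in\TD(d-1)$ with associated root $x$ and $G-x\in\TD(d-1)$ with associated root $r$. Let $e\neq rx$ be almost $k$-valid in $G$ with witness path $P$ through $r$ and $e$. If $x\notin V(P)$ then $P\subseteq G-x$, and since $e$ is still not $k$-valid in $G-x$ it is almost $k$-valid in $G-x$. If $x\in V(P)$, then because $r$ and $x$ are adjacent and $P$ is induced, $r$ and $x$ are consecutive on $P$; deleting the edge $rx$ from $P$ yields a subpath $P_r$ containing $r$ but not $x$ and a subpath $P_x$ containing $x$ but not $r$, and $e$ (being distinct from $rx$) lies in one of them -- in the first case $P_r\subseteq G-x$ certifies that $e$ is almost $k$-valid in $G-x$, in the second case $P_x\subseteq G-r$ certifies that $e$ is almost $k$-valid in $G-r$. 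Thus every almost $k$-valid edge of $G$ other than $rx$ is among the almost $k$-valid edges of $G-x$ or of $G-r$, so there are at most $2g_k(d-1)$ of them, and adding $rx$ gives the bound.

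For $g_k(d)\leq (k-1)g^\star_k(d)$, fix $G\in\TD(d)$ with underlying tree $T$ rooted at $r$ whose children are $x_1,\dots,x_m$ with subtrees $T_1,\dots,T_m$, and put $G_i=G[V(T_i)\cup\{r\}]$; joining $r$ as a leaf to the root $x_i$ of $T_i$ exhibits $G_i\in\TD^\star(d)$ with associated root $r$. Since $G$ has no edge between distinct $T_i$'s, every almost $k$-valid edge $e$ of $G$ lies in exactly one $G_i$ (the one containing the endpoint of $e$ other than $r$), and truncating the witness path of $e$ at $r$ shows $e$ is almost $k$-valid in that $G_i$; call $T_i$ \emph{active} if $G_i$ has an almost $k$-valid edge, and note that each active $T_i$ then contains a neighbour of $r$. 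Suppose $k$ subtrees, say $T_1,\dots,T_k$, were active, with $r$-neighbours $y_1\in V(T_1),\dots,y_k\in V(T_k)$, where we take $y_1$ to be the neighbour of $r$ on a witness path $P$ (towards $e$) of a chosen almost $k$-valid edge $e$ of $G_1$; the $y_i$ are pairwise non-adjacent, so $\{r,y_1,\dots,y_k\}$ induces $K_{1,k}$. If $e=ry_1$ then $e$ already lies in this induced $K_{1,k}$, a $k$-edge induced tree, contradicting that $e$ is not $k$-valid. Otherwise let $Q$ be the subpath of $P$ from $r$ through $y_1$ up to the far endpoint of $e$, with $q':=|E(Q)|$ satisfying $2\leq q'\leq k-1$; then $Q$ together with $k-q'$ of the pendants $y_2,\dots,y_k$ spans an induced tree on exactly $k$ edges containing $e$ -- induced because each of these $y_j$ is adjacent only to $r$ within this vertex set and the $y_j$ are pairwise non-adjacent -- again contradicting that $e$ is not $k$-valid. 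Hence at most $k-1$ subtrees are active, and summing over them gives $g_k(d)\leq (k-1)g^\star_k(d)$.

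Finally, combining the two estimates for $d\geq 2$ gives $g_k(d)\leq (k-1)(2g_k(d-1)+1)$, and a short induction on $d$ starting from $g_k(1)=0$ yields $g_k(d)\leq (2k)^{d-1}-1$; feeding this back into the first estimate gives $g^\star_k(d)\leq 2g_k(d-1)+1\leq 2(2k)^{d-2}-1$. I expect the second estimate to be the delicate point: one must bound how many subtrees at $r$ can carry an almost $k$-valid edge, and then verify that grafting the right number of extra neighbours of $r$ onto the truncated witness path really produces an \emph{induced} tree on exactly $k$ edges -- which is precisely where the separation of the subtrees at $r$ and the length bound on witness paths are used in tandem.
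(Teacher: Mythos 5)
Your proposal follows essentially the same route as the paper: the same two interlocking estimates, namely $g^\star_k(d)\leq 2g_k(d-1)+1$ via Lemma~\ref{lem:cut-depth}, and $g_k(d)\leq (k-1)g^\star_k(d)$ via the branch decomposition at the root together with the star-plus-truncated-path construction showing that $k$ branches carrying almost $k$-valid edges would force $k$-validity, followed by the same numerical induction. One local slip: in the first estimate, in the sub-case $x\in V(P)$ you argue that $r$ and $x$ are consecutive on the induced path $P$, which implicitly assumes $rx\in E(G)$; but $rx$ need not be an edge of $G$ (the paper flags this explicitly), e.g.\ $P$ could be $x$--$a$--$r$ with $rx\notin E(G)$, and then there is no edge $rx$ of $P$ to delete. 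The claimed conclusion still holds: take the minimal subpath of $P$ containing $e$ and $r$ (if it avoids $x$, $e$ is almost $k$-valid in $G-x$), and otherwise $x$ separates $r$ from $e$ on $P$, so the minimal subpath containing $e$ and $x$ avoids $r$ and witnesses that $e$ is almost $k$-valid in $G-r$ --- i.e., exactly the truncation you already use in the second estimate. With that one-line repair your argument is complete and matches the paper's.
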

\begin{proof}

 For a fixed $k$ we prove the claim by induction on $d$.
 If $d=2$, then any $G\in\TD(d)$ is a subgraph of a star.
 Therefore either all edges form a $k$-strong forest or $G$ has at most $k-1$ edges and thus each edge is almost $k$-valid.
 Hence $g_k(2)=k-1$ for any $k \geq 1$, $g^\star_1(2)=0$, and $g^\star_k(2)=1$ for $k\geq 2$.
 
 \medskip
 
 Now suppose that $d\geq 3$ and that the statement of the lemma is true for smaller values of $d$.
 We consider $g^\star_k(d)$ first.
 Let $G\in\TD^\star(d)$, $r$ be the root of the underlying tree $T$ of $G$, and $x$ be the unique neighbor of $r$ in $T$.
 Let $A$ be the set of almost $k$-valid edges $e$ in $G$ such that there is an induced path in $G$ containing $e$, $r$, and not containing $x$.
 Let $B$ be the set of all remaining almost $k$-valid edges in $G$. 
 Each edge in $A$ is almost $k$-valid in $G-x$.
 Similarly each edge in $B\setminus\{rx\}$ is almost $k$-valid in $G-r$ (here the underlying tree is as in Lemma~\ref{lem:cut-depth}). Note that $rx$ might or might not be an edge of $G$ and if it is an edge, it is $k$-valid or almost $k$-valid.
 Since $G-r, G-x \in \TD(d-1)$ by Lemma~\ref{lem:cut-depth}, we conclude that $|A|, |B\setminus\{rx\}|\leq g_k(d-1)$.
 Inductively we obtain $|A|+|B| \leq 2\cdot ((2k)^{d-2}-1) + 1 = 2\cdot (2k)^{d-2}-1$.
 Since $G\in\TD^\star(d)$ was arbitrary we have that $g^\star_k(d) \leq 2\cdot (2k)^{d-2}-1$.
 
 \medskip
 
 Now consider $g_k(d)$ for $d\geq 3$.
 Let $G\in\TD(d)$ and let $r$ be the root of the underlying tree $T$ of $G$.
 Let $x_1, \ldots, x_t$ be the neighbors of $r$ in $T$ and let $G_i$, $1\leq i\leq t$, be the subgraph of $G$ induced by $i^{\rm th}$ branch of $T$, i.e., by
 $r$, $x_i$, and all descendants of $x_i$ in $T$.
 Assume that each of $G_1, \ldots, G_s$ has an edge incident to $r$ and the other $G_i$'s do not have such an edge. Then, in particular, each almost $k$-valid edge of $G$ is in some $G_i$, $i=1, \ldots, s$.

 Assume first that $s\geq k$. There is a star $S$ of size $s$ with center $r$ and edges being from distinct $G_i$'s. 
 If $e$ is an edge contained in some induced path $P$ in $G_i$ where $r$ is an endpoint, then there is an induced tree in $G$ formed by $P$ 
 and all edges of $S$ except perhaps for the edge from $G_i$.
 So any such edge is $k$-valid and there are no almost $k$-valid edges in $G$.
 
 Now assume that $s\leq k-1$.
 Any almost $k$-valid edge $e \in E(G)$ is almost $k$-valid in $G_i$, for some $i\in[s]$.
 There are at most $g_k^\star(d)$ almost $k$-valid edges in $E(G_i)$, $i=1,\ldots,s$. 
 Therefore the total number of almost $k$-valid edges in $G$ is at most $s\cdot g^\star_k(d) \leq (k-1)\, g^\star_k(d)\leq (k-1)\, (2^{d-1} k^{d-2}-1)\leq 2^{d-1} k^{d-1}-1$.
 
 Since $G\in\TD(d)$ was arbitrary we have that $g_k(d) \leq (2k)^{d-1}-1$.
\end{proof}

\subsection{Proof of Theorem~\ref{thm:tree-depth}}

Let $G$ be a graph of tree-depth $d$. First of all consider the case $k=1$. 
 It is well-known (see~\cite{BGH95,NO08}) that any graph of tree-depth at most $d$ has tree-width at most $d-1$.
 Hence if $\td(G) \leq d$, then by Theorem~\ref{thm:tw3} we have $f_1(G) \leq \binom{d}{2}$.
 On the other hand $K_d$ is of tree-depth $d$ and $f_1(K_d) = \binom{d}{2}$, so the above bound is tight.

 \medskip
 
 For the rest of the proof assume that $k \geq 2$. 
 We prove that $f_k(G) \leq (2k)^d$ for any graph of tree-depth $d$.
 First we prove this claim for $G\in\TD(d)$ by induction on $d$, then we deduce the general case.
 Recall that $G\in\TD(d)$ if and only if $G$ has an underlying tree of depth $d$.
 
 If $d=1$, then any graph in $\TD(d)$ has no edges.
 If $d=2$, then any $G\in\TD(d)$ is a subgraph of a star. If $G$ has at least $k$ edges, then $G$ is a $k$-strong forest itself. If $G$ has less than $k$ edges, there are no $k$-valid edges.
 Hence $f_k(G)\leq 1$.
 
 Now suppose that $d \geq 3$ and assume that $f_k(G)\leq (2k)^{d'}$ for any $G\in\TD(d')$ and $d'<d$.
Let $r$ denote the root of the underlying tree $T$ of $G$.
 Let $x_1, \ldots, x_t$ be the neighbors of $r$ in $T$ and let $G_i$, $1\leq i\leq t$ be the subgraph of $G$ induced by $i^{\rm th}$ branch of $T$, i.e., by $r$, $x_i$, and all descendants of $x_i$ in $T$.
 Then $G_i\in\TD^\star(d)$, where in the corresponding underlying tree $r$ is the root and $x_i$ is its unique neighbor, $i=1,\ldots,t$. Here the underlying trees for subgraphs are defined as in Lemma~\ref{lem:cut-depth}.

 Let $E$ be the set of $k$-valid edges in $G$. We shall split $E$ into sets $S_1, \ldots, S_5$ and shall show that each of these sets is covered by a desired number of $k$-strong forests, see Figure~\ref{fig:treedepthPartition}.

\begin{itemize}
\item Let $S_1 = \{rx_i : ~ i=1,\ldots,t\}\cap E $.

\item
Let $S_2$ be the set of edges from $ E \setminus S_1$ that are $k$-valid in $G_i - r$ for some $i \in \{1,\ldots,t\}$.

\item
 Let $S_3 $ be the set of edges from $ E \setminus (S_1\cup S_2)$ that are $k$-valid in $G_i - x_i$ for some $i \in \{1,\ldots,t\}$.

\item
Let $S_4$ be the set of edges from $E \setminus (S_1 \cup S_2\cup S_3)$ that are $k$-valid in $G_i$ for some $i \in \{1,\ldots,t\}$.

\item
Let $S_5 =E - (S_1 \cup S_2 \cup S_3\cup S_4)$.
\end{itemize}

 \begin{figure}
 \centering
 \includegraphics{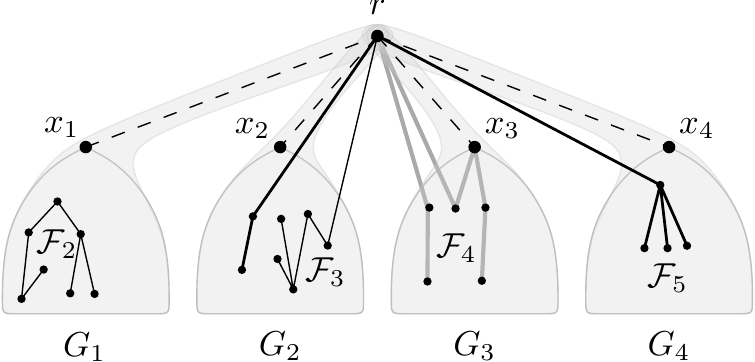}
 \caption{Illustration of the proof of Theorem~\ref{thm:tree-depth}.}
 \label{fig:treedepthPartition}
\end{figure}

I.e., $S_2, S_3$, and $S_4$ consist of $k$-valid edges in some $G_i$, with witness trees not containing $r$, not containing $x_i$, and containing both $r$ and $x_i$, respectively. 
Each edge in $S_5$ is not $k$-valid in any $G_i$, but it is almost $k$-valid in some $G_i$. 
In the following, we say that a family of forests is a \emph{good cover} of an edge set if these covering forests are $k$-strong.

 \begin{claim}
 There exists a good cover $\cF_1$ of $S_1$ of size at most $k-1$.
 \end{claim}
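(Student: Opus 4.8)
The plan is to handle $S_1$ by a case distinction on $|S_1|$, in both cases exploiting that the branch roots $x_1,\dots,x_t$ are pairwise non-adjacent in $G$. First I would record this non-adjacency: $G$ is a subgraph of the closure of its underlying tree $T$, and in that closure two vertices are adjacent only if one is an ancestor of the other; since $x_i$ and $x_j$ with $i\neq j$ lie in different branches of $T$ below the root $r$, neither is an ancestor of the other, so $x_ix_j\notin E(G)$. Hence for every $I\subseteq\{1,\dots,t\}$ the induced subgraph $G[\{r\}\cup\{x_i : i\in I\}]$ is exactly the star with center $r$ and leaf set $\{x_i : i\in I\}$, in particular an induced tree in $G$.

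If $|S_1|\geq k$, let $I=\{i : rx_i\in S_1\}$; then $G[\{r\}\cup\{x_i : i\in I\}]$ is an induced star on $|S_1|\geq k$ edges, so it is a single $k$-strong forest containing all of $S_1$. Since $k\geq 2$, the family $\cF_1$ consisting of just this forest is a good cover of $S_1$ of size $1\leq k-1$.

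If $|S_1|\leq k-1$, then for each $e=rx_i\in S_1$, which is $k$-valid by the definition of $E$, I would fix a witness tree $W_e$: an induced tree in $G$ with exactly $k$ edges containing $e$. Each $W_e$ is connected and induced with $k$ edges, hence a $k$-strong forest, so $\cF_1=\{W_e : e\in S_1\}$ is a good cover of $S_1$ of size $|S_1|\leq k-1$. As the two cases are exhaustive, this proves the claim. The one point requiring care is the non-adjacency of the $x_i$'s, which is what makes the induced subgraph on $\{r\}\cup\{x_i : i\in I\}$ a genuine star; beyond that the argument is immediate from the definitions of $k$-valid edge, witness tree, $k$-strong forest, and good cover, and I do not expect any real obstacle.
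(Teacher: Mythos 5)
Your proof is correct and follows essentially the same route as the paper: for $|S_1|\geq k$ take the single induced star with edge set $S_1$ (your verification that the $x_i$'s are pairwise non-adjacent, which the paper leaves implicit, is the right justification that this star is induced), and for $|S_1|\leq k-1$ cover each edge of $S_1$ by its own $k$-strong witness forest, using $k\geq 2$ in the first case to get the bound $k-1$.
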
 
 \begin{claimproof}
 If $|S_1| < k$, for each $e\in S_1$ pick a $k$-strong forest in $G$ containing $e$ and let $\cF_1$ be the set of all these forests.
 If $|S_1| \geq k$, then let $\cF_1$ consist of one forest that is the induced star with edge set $S_1$.
 \end{claimproof}

 \begin{claim}
 There exists a good cover $\cF_2$ of $S_2$ of size at most $f_k(d-1)$.
 \end{claim}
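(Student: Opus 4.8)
The plan is to cover $S_2$ by applying the induction hypothesis separately inside each branch $G_i-r$ and then gluing the branch covers together. First I would record that every edge $e\in S_2$ is non-incident to $r$: if $e$ were incident to $r$ then $e$ is not an edge of any $G_i-r$, hence cannot be $k$-valid in any $G_i-r$, so $e\notin S_2$. Since the branches $G_1-r,\dots,G_t-r$ are pairwise vertex-disjoint induced subgraphs of $G$ that partition $V(G)\setminus\{r\}$, each $e\in S_2$ lies in exactly one $G_i-r$, and by the definition of $S_2$ it is $k$-valid there.

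Next, Lemma~\ref{lem:cut-depth} gives $G_i-r\in\TD(d-1)$, so in particular $\td(G_i-r)\le d-1$ and the induction hypothesis yields $f_k(G_i-r)\le f_k(d-1)$. Hence for each $i$ there is a collection $F^i_1,\dots,F^i_{f_k(d-1)}$ of $k$-strong forests of $G_i-r$ (padding with empty forests when $f_k(G_i-r)<f_k(d-1)$) whose union contains every $k$-valid edge of $G_i-r$, and in particular every edge of $S_2$ lying in $G_i-r$. I would then set $F_j:=\bigcup_{i=1}^t F^i_j$ for $j=1,\dots,f_k(d-1)$ and take $\cF_2=\{F_1,\dots,F_{f_k(d-1)}\}$. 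By the previous paragraph $\cF_2$ covers $S_2$ and $|\cF_2|\le f_k(d-1)$, as required, provided each $F_j$ is a legitimate $k$-strong forest of $G$.

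The only real point to verify is this last claim, and two observations do it. First, since $G_i-r$ is an induced subgraph of $G$, any induced subgraph of $G_i-r$ is an induced subgraph of $G$; thus each $F^i_j$, being induced in $G_i-r$ with all components having at least $k$ edges, is already a $k$-strong subgraph of $G$. Second, in a graph with an underlying tree $T$ rooted at $r$, every edge joins two vertices that are comparable in $T$ (ancestor/descendant); two vertices lying in distinct branches $G_i-r$ and $G_{i'}-r$ have $r$ as their only common $T$-ancestor, so after deleting $r$ there is no $G$-edge between distinct branches. Consequently $F_j$ is a disjoint union of the $F^i_j$ with no $G$-edges running between different $F^i_j$, hence $F_j$ is induced in $G$ and its connected components are precisely the components of the individual $F^i_j$, each of size at least $k$; so $F_j$ is $k$-strong.

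I expect the crux to be exactly that structural fact about the absence of cross-branch edges, combined with the ``induced subgraph of an induced subgraph is induced'' observation, since these are what let per-branch covers be glued into a single cover without inflating the number of forests; everything else is routine bookkeeping around the induction hypothesis $f_k(G_i-r)\le f_k(d-1)$ delivered by Lemma~\ref{lem:cut-depth}.
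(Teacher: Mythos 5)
Your proof is correct and follows essentially the same route as the paper: apply Lemma~\ref{lem:cut-depth} and the induction hypothesis in each branch $G_i-r$, then glue one forest per branch into a single $k$-strong forest of $G$, using the fact that after deleting $r$ no edges of $G$ run between distinct branches (which the paper phrases as ``none of these forests contain $r$, so there are no edges between them''). Your additional justification via ancestor/descendant comparability in the underlying tree and the ``induced subgraph of an induced subgraph is induced'' observation simply makes explicit what the paper leaves implicit.
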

 \begin{claimproof}
 Let $i \in \{1,\ldots,t\}$.
 By Lemma~\ref{lem:cut-depth} we have that $G_i-r\in\TD(d-1)$.
 Hence we have $f_k(G_i-r)\leq f_k(d-1)$.
 Let $\cA_i$ denote a good cover of $S_2\cap E(G_i-r)$ of size at most $f_k(d-1)$ with forests contained in $G_i-r$.
 We shall combine the forests from $\cA_1,\ldots,\cA_t$ into a new family $\cF_2$ of at most $f_k(d-1)$ $k$-strong forests of $G$.
 Any union $F_1\cup \cdots \cup F_t$, where $F_i\in \cA_i$, is a $k$-strong forest in $G$ because none of these forests contain $r$ and thus there are no edges between $F_i$ and $F_j$ for $1\leq i <j\leq t$.
 So, let each forest from $\cF_2$ be a union of at most one forest from each $\cA_i$. We see that we can form such a family of size at most $\max \{|\cA_i|: 1\leq i\leq t\}$. Thus $\cF_2$ is a family of at most $f_k(d-1)$ $k$-strong forests of $G$.
 Since each edge $e\in S_2$ is $k$-valid in some $G_i-r$, the set $\cF_2$ is a good cover of $S_2$.
 \end{claimproof}

 \begin{claim}
 There exists a good cover $\cF_3$ of $S_3$ of size at most $f_k(d-1)$.
 \end{claim}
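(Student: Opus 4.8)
The plan is to mirror, almost verbatim, the argument just used for the claim about $S_2$, with $G_i-r$ replaced throughout by $G_i-x_i$. For each $i\in\{1,\ldots,t\}$ we apply Lemma~\ref{lem:cut-depth} to $G_i\in\TD^\star(d)$ (whose underlying tree has root $r$ adjacent to $x_i$) to get $G_i-x_i\in\TD(d-1)$, hence $f_k(G_i-x_i)\leq f_k(d-1)$. Fix a good cover $\cA_i$ of $S_3\cap E(G_i-x_i)$ of size at most $f_k(d-1)$ consisting of $k$-strong forests contained in $G_i-x_i$. Then build $\cF_3$ by letting each of its members be a union of at most one forest from each $\cA_i$: list the forests of each $\cA_i$ and let the $\ell$-th member of $\cF_3$ be the union of the $\ell$-th forests over all $i$ that have at least $\ell$ forests. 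This produces a family of size at most $\max_i|\cA_i|\leq f_k(d-1)$. Since every edge of $S_3$ is $k$-valid in some $G_i-x_i$ and therefore lies in $E(G_i-x_i)$ and is covered by $\cA_i$, every edge of $S_3$ is covered by $\cF_3$.

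The one point requiring care — and the only respect in which this differs from the $S_2$ claim — is that the graphs $G_i-x_i$ are not pairwise vertex-disjoint: they all contain the root $r$. So one must verify that a union $F_1\cup\cdots\cup F_t$ with $F_i\in\cA_i$ is again a $k$-strong, in particular induced, forest of $G$. Writing $D_i$ for the set of descendants of $x_i$ in $T$, we have $V(G_i-x_i)=\{r\}\cup D_i$, and for $i\neq j$ the sets $D_i$ and $D_j$ are disjoint, so $V(G_i-x_i)\cap V(G_j-x_j)=\{r\}$; moreover $G$ has no edge between $D_i$ and $D_j$, because in the closure of $T$ two vertices are adjacent only if one is an ancestor of the other, and a vertex of $D_i$ and a vertex of $D_j$ are incomparable. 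Consequently $G[V(F_i)]=F_i$ for every $i$ (as $F_i$ is induced in $G_i-x_i$ and $G$ adds no edge inside $\{r\}\cup D_i$), and $G[V(F_1)\cup\cdots\cup V(F_t)]=F_1\cup\cdots\cup F_t$. This union is acyclic, since the $F_i$ pairwise share at most the single vertex $r$ (a cycle would need edges from two of them and hence two common vertices), and it is $k$-strong, since every component avoiding $r$ is a component of some $F_i$ and thus has at least $k$ edges, while the component through $r$ is obtained by gluing at $r$ the $r$-components of some of the $F_i$, each of which already has at least $k$ edges.

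I do not expect a serious obstacle here; the substance is the induced-subgraph bookkeeping for the union, i.e.\ checking that gluing the chosen forests at the common vertex $r$ introduces neither a new cycle nor a new edge of $G$, which is exactly the ancestor/descendant structure of $T$ recorded above. A minor point worth stating explicitly in the write-up is that each edge of $S_3$ lies in exactly one $G_i-x_i$ — the branch containing its endpoint(s) other than $r$ — so nothing is missed by having $\cA_i$ cover only $S_3\cap E(G_i-x_i)$.
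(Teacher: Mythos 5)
Your overall plan (mirror the $S_2$ argument, get $G_i-x_i\in\TD(d-1)$ from Lemma~\ref{lem:cut-depth}, take covers $\cA_i$ and merge one forest per branch) is the paper's plan, but the step you single out as ``the one point requiring care'' is exactly where your argument has a genuine gap. Your claim that $G[V(F_1)\cup\cdots\cup V(F_t)]=F_1\cup\cdots\cup F_t$ does not follow from ``no edges between $D_i$ and $D_j$'': you have only excluded cross edges avoiding $r$, but $r$ is adjacent in $G$ to vertices inside every branch. Concretely, if some $F_i$ contains $r$ while another $F_j$ does not contain $r$ but contains a neighbour $w$ of $r$ (e.g.\ a component of $F_j$ that is a star centred at a neighbour of $r$, entirely inside $D_j$), then $rw$ is an edge of $G$ joining two distinct components of $F_i\cup F_j$, so the union is not an induced forest. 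Nothing in your choice of the $\cA_i$ rules this out: $\cA_i$ is just some family of $k$-strong forests of $G_i-x_i$ covering $S_3\cap E(G_i-x_i)$, and such forests need not contain $r$, nor avoid the neighbourhood of $r$.

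The missing ingredient is the defining property of $S_3$, namely $S_2\cap S_3=\emptyset$, which you never use. An edge $e\in S_3\cap E(G_i-x_i)$ is $k$-valid in $G_i-x_i$ but \emph{not} $k$-valid in $G_i-r$; hence the component of any $k$-strong forest of $G_i-x_i$ containing $e$ must contain $r$ (otherwise that component would be an induced tree with at least $k$ edges in $G_i-r$, making $e\in S_2$). So after discarding from each forest of $\cA_i$ all components not meeting $S_3$ (deleting whole components of an induced forest keeps it induced and $k$-strong), every forest in every $\cA_i$ may be assumed to be a single tree containing $r$. With this normalisation your union argument goes through: any edge of $G$ between different branches is incident to $r$, and since the forest of the other branch also contains $r$ and is induced in $G_j-x_j$, that edge already lies in it; the union is then a tree through $r$ with at least $k$ edges. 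This is precisely how the paper argues (``all of these forests contain $r$ as $S_2\cap S_3=\emptyset$''); without it, the claimed inducedness of the union is false in general.
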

\begin{claimproof}
Let $i \in \{1,\ldots,t\}$. By Lemma~\ref{lem:cut-depth} we have that $G_i-x_i\in\TD(d-1)$.
 Hence we have $f_k(G_i-x_i) \leq f_k(d-1)$.
 Let $\cA_i$ denote a good cover of $S_3 \cap E(G_i - x_i)$ consisting of at most $f_k(d-1)$ forests in $G_i - x_i$.
 Similarly as in the claim before, any union $F_1\cup \cdots \cup F_t$, where $F_i\in \cA_i$, is a $k$-strong forest in $G$ because all of these forests contain $r$ as $S_2\cap S_3=\emptyset$.
 Let each forest in $\cF_3$ be a union of at most one forest from each of $\cA_i$, $i=1, \ldots, t$. 
 It is clear that one can build such a family with at most $\max \{|\cA_i| : 1\leq i\leq t\}$ forests. 
So, $\cF_3$ consists of at most $f_k(d-1)$ $k$-strong forests of $G$. Since each edge $e\in S_3$ is $k$-valid in some $G_i-x_i$, 
the set $\cF_3$ is a good cover of $S_3$.
 \end{claimproof}

%
%
Recall that $g_k(d)$ denotes the maximum number of almost $k$-valid edges in a graph $G \in \TD(d)$, where an edge $e$ is almost $k$-valid if it is not $k$-valid but there exists an induced path containing $e$ and the root $r$ of the underlying tree $T$.

 \begin{claim}
 There exists a good cover $\cF_4$ of $S_4$ of size at most $2 g_k(d-1)$.
 \end{claim}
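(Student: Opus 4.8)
The plan is to mimic the treatment of $\cF_2$ and $\cF_3$: handle each branch $G_i$ in isolation and then glue the branch-covers together along the common vertex $r$. The new ingredient is the per-branch estimate $|S_4\cap E(G_i)|\le 2g_k(d-1)$, which will suffice because for every $e\in S_4\cap E(G_i)$ a single witness tree of $e$ — which, as we shall see, is forced to contain $r$ — is already a $k$-strong forest covering $e$.

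First I would record two simple facts. Fact~1: for $e\in S_4\cap E(G_i)$, every witness tree $W_e$ of $e$ that lies in $G_i$ contains both $r$ and $x_i$, and hence also the edge $rx_i$ since $W_e$ is induced; this is because $e$ is $k$-valid in $G_i$ by definition of $S_4$, whereas $e\notin S_2$ forces $e$ to be not $k$-valid in $G_i-r$ (so $W_e\ni r$) and $e\notin S_3$ forces $e$ to be not $k$-valid in $G_i-x_i$ (so $W_e\ni x_i$). In particular $W_e$ is a single induced tree on $k$ edges, i.e.\ a $k$-strong forest, and it contains $r$. Fact~2: since a tree has no chords, every path inside a tree is an induced path; consequently, if $D$ is an induced tree of a graph $H$ with associated root $\rho\in V(D)$, $e\in E(D)$, and $e$ is not $k$-valid in $H$, then $e$ is almost $k$-valid in $H$ — take as witness path the $D$-path from $\rho$ to the endpoint of $e$ farther from $\rho$, which passes through $e$.

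Next I would prove $|S_4\cap E(G_i)|\le 2g_k(d-1)$. By Lemma~\ref{lem:cut-depth}, $G_i-r\in\TD(d-1)$ with associated root $x_i$ and $G_i-x_i\in\TD(d-1)$ with associated root $r$, so each of these graphs has at most $g_k(d-1)$ almost $k$-valid edges. It thus suffices to assign to every $e\in S_4\cap E(G_i)$ one of these two graphs in which $e$ is almost $k$-valid. If $e$ is incident to $x_i$ but not to $r$, then the single edge $e$ is an induced path through the root $x_i$ of $G_i-r$, so $e$ is almost $k$-valid in $G_i-r$ (using $e\notin S_2$). If $e$ is incident to $r$, write $e=ry$; then $y\neq x_i$ (otherwise $e\in S_1$), so $e\in E(G_i-x_i)$ and $e$ itself is an induced path through the root $r$ of $G_i-x_i$, so $e$ is almost $k$-valid in $G_i-x_i$. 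Otherwise $e$ is incident to neither $r$ nor $x_i$; fix a witness tree $W_e$ in $G_i$ (so $W_e\ni r,x_i$ by Fact~1) and let $C_e$ be the component of the forest $W_e-r$ containing $e$. If $x_i\in C_e$, apply Fact~2 with $D=C_e$, $\rho=x_i$, $H=G_i-r$ to conclude that $e$ is almost $k$-valid in $G_i-r$. If $x_i\notin C_e$, then the unique $W_e$-path from $r$ to (an endpoint of) $e$ stays in $\{r\}\cup C_e$ and so avoids $x_i$; hence $r$ and $e$ lie in one component $C'_e$ of the forest $W_e-x_i$, and Fact~2 with $D=C'_e$, $\rho=r$, $H=G_i-x_i$ gives that $e$ is almost $k$-valid in $G_i-x_i$. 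These cases partition $S_4\cap E(G_i)$; the edges assigned to $G_i-r$ are distinct almost $k$-valid edges there, and likewise for $G_i-x_i$, so $|S_4\cap E(G_i)|\le g_k(d-1)+g_k(d-1)=2g_k(d-1)$.

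Finally I would assemble $\cF_4$. For each $i$ put $\cA_i=\{W_e : e\in S_4\cap E(G_i)\}$; by the above $|\cA_i|\le 2g_k(d-1)$, $\cA_i$ covers $S_4\cap E(G_i)$, and each member of $\cA_i$ is a single induced tree on $k$ edges through $r$. Since distinct branches share only the vertex $r$ and $G$ has no edge joining different branches away from $r$, the union of at most one member taken from each $\cA_i$ is again a single induced tree through $r$ with at least $k$ edges, in particular a $k$-strong forest of $G$. Therefore, exactly as in the proofs for $\cF_2$ and $\cF_3$, the families $\cA_1,\ldots,\cA_t$ combine into a family $\cF_4$ of at most $\max_i|\cA_i|\le 2g_k(d-1)$ $k$-strong forests of $G$ covering $\bigcup_i\bigl(S_4\cap E(G_i)\bigr)=S_4$, as desired. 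The part I expect to require the most care is the three-case argument of the previous paragraph: one must watch out for edges incident to $r$ or to $x_i$, where the witness-tree argument degenerates and $e$ itself has to serve as the witness path, and for the claim that once deleting $r$ has separated $x_i$ from $e$, the vertex $r$ and the edge $e$ stay together after deleting $x_i$ instead.
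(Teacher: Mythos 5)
Your proposal is correct and follows essentially the same route as the paper: every witness tree of an edge in $S_4\cap E(G_i)$ must contain both $r$ and $x_i$, a suitable induced path inside it shows the edge is almost $k$-valid in $G_i-r$ or in $G_i-x_i$ (giving $|S_4\cap E(G_i)|\le 2g_k(d-1)$), and one witness tree per edge, all containing $r$, are merged across branches exactly as for $\cF_2$ and $\cF_3$ into at most $2g_k(d-1)$ $k$-strong forests; your three-case analysis merely fills in details the paper leaves implicit. The only blemish is the unused aside in your Fact~1 that $W_e$ contains the edge $rx_i$ -- this need not hold, since $rx_i$ is an edge of the underlying tree $T$ but not necessarily of $G$ -- yet nothing in your argument relies on it.
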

 \begin{claimproof}
 Let $i \in \{1,\ldots,t\}$ and let $e \in S_4$. Then $e$ is $k$-valid in $G_i$. Since $e$ is not $k$-valid in $G_i-r$ and not $k$-valid in $G_i - x_i$, this means that each witness tree of $e$ in $G_i$ contains both $r$ and $x_i$. Every such witness tree contains a path containing $e$, $x_i$ and not $r$, or a path containing $e$, $r$ and not $x_i$.
 This path is induced, as witness trees are induced, and thus (as $e \notin S_1$) $e$ is almost $k$-valid in either $G_i-r$ or $G_i -x_i$, respectively. 
 Hence, $|S_4 \cap G_i| \leq 2 g_k(d-1)$ by the definition of $g_k(d-1)$, since $G_i-r, G_i-x_i \in \TD(d-1)$ by Lemma~\ref{lem:cut-depth}.

 For each edge $e$ in $S_4 \cap G_i$ we pick one witness tree of $e$ that is contained in $G_i$.
 Let $\cA_i$ denote the set of these at most $2g_k(d-1)$ $k$-strong forests.
 As all induced forests in $\cA_1,\ldots,\cA_t$ contain the root $r$, we can again, as in the previous claim, form a set $\cF_4$ of at most $2 g_k(d-1)$ $k$-strong forests in $G$ covering $S_4$.
 \end{claimproof}

%
 \begin{claim}
 There exists a good cover $\cF_5$ of $S_5$ of size at most $(k-1)\, g^\star_k(d)$.
 \end{claim}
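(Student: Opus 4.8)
The plan is to show that each edge in $S_5$ lives inside one of the branch graphs $G_i$ as an \emph{almost $k$-valid} edge, and then to exploit the fact that there are very few such edges in each $G_i$ together with the fact that a single small star through the root can be attached to any of them to form a genuine witness tree \emph{inside $G$} (though not inside $G_i$). Concretely, first I would recall that every edge $e \in S_5$ is $k$-valid in $G$ (since $S_5 \subseteq E$) but is not $k$-valid in any $G_i$; on the other hand, $e$ does lie in some $G_i$, and any witness tree of $e$ in $G$ restricted appropriately produces an induced path in $G_i$ through the root $r$ containing $e$ (using that $e \notin S_1$, so $e$ is not incident to $r$, and that witness trees are induced). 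Hence $e$ is almost $k$-valid in $G_i \in \TD^\star(d)$, so $|S_5 \cap E(G_i)| \le g^\star_k(d)$ for each $i$.

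The next step is the branch-counting argument, exactly parallel to the $S_4$ analysis: the key point is that only the branches $G_1,\ldots,G_s$ (those with an edge incident to $r$) can contain an almost $k$-valid edge, and — crucially, from the proof of Lemma~\ref{lem:rootPaths} — if $s \ge k$ then there are \emph{no} almost $k$-valid edges at all, because a star of size $s \ge k$ at $r$ using edges from distinct branches can be completed with any root-path in $G_i$ into an induced tree on at least $k$ edges. So we may assume $s \le k-1$, giving $|S_5| = \sum_{i=1}^{s} |S_5 \cap E(G_i)| \le (k-1)\, g^\star_k(d)$. Now for each $e \in S_5 \cap E(G_i)$ I would fix an induced path $P_e$ in $G_i$ through $r$ and $e$; appending to $P_e$ the $s-1$ (or $s$, suitably chosen) edges $rx_j$ with $j \ne i$, $j \in [s]$, yields an induced $k$-strong tree $W_e$ in $G$ containing $e$ — this is where $k$-validity of $e$ in $G$ actually gets realized by a concrete forest. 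Let $\cA_i$ be the collection of these witness trees $W_e$ for $e \in S_5 \cap E(G_i)$; since each $W_e$ contains $r$, all trees in $\bigcup_i \cA_i$ pairwise share $r$, so no two of them can be placed in a common induced forest, and the total number of covering forests needed is $\sum_{i=1}^s |\cA_i| \le (k-1)\, g^\star_k(d)$. Taking $\cF_5$ to be exactly this set of at most $(k-1)\, g^\star_k(d)$ single-tree $k$-strong forests finishes the claim.

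The main obstacle I anticipate is the careful bookkeeping around the root: one must make sure that attaching the extra star edges $rx_j$ to the path $P_e$ does not create a chord (so that the result stays \emph{induced}), and that one uses enough of them to reach $k$ edges while not forcing $e$ into a branch with an edge at $r$ that would double-count. This is handled exactly as in the $s\ge k$ sub-case of Lemma~\ref{lem:rootPaths}: the path $P_e$ lies in the single branch $G_i$, the edges $rx_j$ for $j\ne i$ lie in disjoint branches, and the only vertex shared across branches is $r$ itself, which is an endpoint of each $rx_j$ and an interior-or-endpoint vertex of $P_e$ — so the union is a tree and, since all cross-branch adjacencies in $G$ pass through $r$, it is induced. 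The case split on $s$ (whether $s \ge k$, in which case $S_5=\emptyset$, or $s \le k-1$) is what makes the bound $(k-1)\,g^\star_k(d)$ rather than $t\cdot g^\star_k(d)$ come out; everything else is a routine assembly of the witness trees already guaranteed by the previous claims' template.
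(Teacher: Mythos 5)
Your argument breaks down at the case split on $s$. You claim that if $s \geq k$ then $S_5 = \emptyset$, "from the proof of Lemma~\ref{lem:rootPaths}". But that lemma's argument shows that when $s \geq k$ there are no edges that are almost $k$-valid \emph{in $G$}, i.e.\ no edges that fail to be $k$-valid in $G$; edges of $S_5$ are by definition $k$-valid in $G$ (they lie in $E$) and are only almost $k$-valid \emph{in some branch graph $G_i$}, which is a different notion. Making such an edge $k$-valid in $G$ by attaching the star at $r$ does not remove it from $S_5$, since membership in $S_2,S_3,S_4$ requires $k$-validity inside a single $G_i$ (or $G_i-r$, $G_i-x_i$). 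A concrete counterexample: let $G$ be the spider with root $r$ and $s\geq k$ branches $r$--$x_i$--$y_i$, with $k>2$. Each edge $x_iy_i$ is $k$-valid in $G$ (path $y_ix_ir$ plus $k-2$ star edges) but not $k$-valid in any $G_i$, so $S_5$ contains all $s$ of these edges, and $s$ can be arbitrarily large compared with $(k-1)\,g^\star_k(d)$. Thus in the case $s\geq k$ your cover either misses these edges or, with one witness tree per edge, blows past the claimed bound. The paper handles exactly this case by a grouping argument you are missing: since each branch contains at most $g^\star_k(d)$ edges of $S_5$, one chooses for each $j\leq g^\star_k(d)$ one induced root-path $P_i^j$ in each $G_i$ so that every edge of $S_5$ lies on some $P_i^j$, and because $s\geq k$ the union $P_1^j\cup\cdots\cup P_s^j$ is a single induced tree with at least $k$ edges; this yields only $g^\star_k(d)\leq (k-1)\,g^\star_k(d)$ forests.

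A second, smaller problem is your explicit witness construction in the case $s\leq k-1$: the tree $W_e$ formed by an induced root-path $P_e$ in $G_i$ together with the $s-1$ star edges $rx_j$, $j\neq i$, is indeed induced (cross-branch adjacencies only pass through $r$), but it need not have $k$ edges — $P_e$ may be short and $s-1\leq k-2$, e.g.\ $k=5$, $s=2$, $P_e$ of length $2$ gives only $3$ edges. There is no need for this construction: since $e\in S_5\subseteq E$ is $k$-valid in $G$, you can simply take an arbitrary witness tree of $e$ in $G$ as its own forest, which is what the paper does, and then $|S_5|\leq (k-1)\,g^\star_k(d)$ bounds the number of forests. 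Your counting of $|S_5\cap E(G_i)|\leq g^\star_k(d)$ via almost $k$-validity in $G_i\in\TD^\star(d)$ is correct and matches the paper; the gap is entirely in the treatment of $s\geq k$ and in the size of your hand-built witness trees.
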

 \begin{claimproof}
 Note that $S_5$ consists of those edges whose witness trees all contain edges from at least two different $G_i$'s.
 Without loss of generality assume that each of $G_1, \ldots, G_s$ have an edge incident to $r$ and the other $G_i$'s do not have such an edge.
 Then each $e\in S_5$ is in $G_i$ for some $i\in\{1,\ldots,s\}$ and moreover $e$ is almost $k$-valid in this $G_i$.
 Hence $|E(G_i)\cap S_5|\leq g^\star_k(d)$ for all $i$, $1\leq i\leq s$, and $|E(G_i)\cap S_5|=0$ for all $i$, $s< i\leq t$.

 If $s \leq k-1$, then $|S_5| \leq (k-1)\, g^\star_k(d)$.
 In this case we let each forest in $\cF_5$ consist of one witness tree for each $e\in S_5$.
 
 If $s \geq k$, then for all $i \in \{1,\ldots,s\}$ and all $j$, $1\leq j\leq g^\star_k(d)$, we pick (not necessarily distinct) induced paths $P_i^j$ in $G_i$ starting with $r$ such that each edge in $S_5$ is contained in some path $P_i^j$. Such a path containing $e \in S_5$ exists, since $e$ is almost $k$-valid in some $G_i$.
 As $s \geq k$, the union of paths $P_1^j \cup \cdots \cup P_s^j$ forms a $k$-strong forest in $G$ for each $j$, $1\leq j\leq g^\star_k(d)$.
 Moreover each edge in $S_5$ is contained in one of these forests.
 Hence $\cF_5 = \{P_1^j \cup \cdots \cup P_s^j \mid 1\leq j\leq g^\star_k(d)\}$ is a good cover of $S_5$ as desired.
 \end{claimproof}

 From the above claims we get that $\cF = \cF_1 \cup \cF_2 \cup \cF_3 \cup \cF_4 \cup \cF_5$ is a good cover of all $k$-valid edges in $G$, since $S_1 \cup S_2 \cup S_3 \cup S_4 \cup S_5$ contains all $k$-valid edges of $G$.
 Moreover, with the above claims, induction, Lemma~\ref{lem:rootPaths} and $k \geq 2$ we get
 \begin{align*}
 |\cF| &\leq |\cF_1| + |\cF_2| + |\cF_3| + |\cF_4| + |\cF_5|\leq k-1 + 2\,f_k(d-1)+2\,g_k(d-1) + (k-1)\, g^\star_k(d)\\
 &\leq k-1 + 2 \cdot (2k)^{d-1} + 2\cdot ((2k)^{d-2}-1) + (k-1) (2 (2 k)^{d-2}-1)\\
 &\leq (2k)^{d-2}( 2\cdot 2k + 2 + 2(k-1)) = (2k)^{d-2} 6k \leq (2k)^{d-2} 4k^2 = (2k)^d,
 \end{align*}
 which proves that $f_k(G) \leq (2k)^d$ for $G\in\TD(d)$.
 
 Now if $G$ has tree-depth at most $d$, then each component of $G$ is in $\TD(d)$.
 By the previous arguments all $k$-valid edges of such a component can be covered by at most $(2k)^d$ $k$-strong forests.
 A union of one such forest from each component is a $k$-strong forest in $G$.
 Hence we can form at most $(2k)^d$ $k$-strong forests of $G$ that cover all $k$-valid edges of $G$.
 Thus $f_k(G) \leq (2k)^d$ for each graph $G$ of tree-depth at most $d$.


Finally we shall prove that $f_k(G) \leq (2k)^{k+1} \binom{d}{k+1}$, for $d\geq k+1$ and a graph $G$ with $\td(G) \leq d$. Let $H$ be a maximal tree-depth $d$ supergraph of $G$ on the same set of vertices.
 It is known~\cite{NO06,NO08}, that there is a proper $d$-coloring of $H$ (a so called $d$-centered coloring) such that any set of $p$ colors, $p \leq d$, induces a tree-depth~$p$ graph.
 We hence have a $d$-coloring of $G$ such that each of the $\binom{d}{k+1}$ subsets of $(k+1)$ colors induces a graph of tree-depth $k+1$.
As each $k$-valid edge has a witness tree induced by $k+1$ vertices, each witness tree belongs to one of these $\binom{d}{k+1}$ graphs.
 So each $k$-valid edge of $G$ is $k$-valid in (at least) one of these graphs. Thus the total number of $k$-strong forests covering $k$-valid edges of $G$ is at most 
 $\binom{d}{k+1} (2k)^{k+1}$, where the bound $(2k)^{k+1}$ comes from the first part of the theorem when $d=k+1$. 
 \qed

 \section{Proof of Theorem~\ref{thm:main}}\label{sec:main}
Recall that $\chi_p(G)$ is the smallest integer $q$ such that $G$ admits a vertex coloring with $q$ colors such that for each $p' \leq p$ each $p'$-set of colors induces a subgraph of $G$ of tree-depth at most $p'$. 
Since $\cC$ is of bounded expansion there is a sequence of integers $b_1,b_2,\ldots$, such that for any graph $G \in \cC$ and any $p$, $\chi_p(G)\leq b_p$. 

Let $G\in \cC$.
Note that $\chi_{\rm acyc}(G) \leq \chi_2(G)$ and hence Theorem~\ref{thm:monotonicity}\ref{enumii:a-vs-f_1} gives $f_1(G) \leq \binom{\chi_2(G)}{2}$.
Thus we can take $c_1 = \binom{b_2}{2}$. For $k \geq 2$, consider a $(k+1)$-tree-depth coloring of $G$ with $\chi_{k+1}(G) \leq b_{k+1}$ colors.
For each of the $\binom{\chi_{k+1}(G)}{k+1}$ subgraphs $H$ induced by $k+1$ colors, consider a cover of the $k$-valid edges in $H$ with $f_k(H)$ $k$-strong forests.
Note that each $k$-valid edge of $G$ is $k$-valid in at least one of these graphs.
Indeed, a witness tree of an edge $e$ is induced by $k+1$ vertices, that are colored with at most $k+1$ different colors, hence $e$ is $k$-valid in a graph $H$ induced by these colors.
Then the union $\cF$ of all these forests is a cover of all $k$-valid edges in $G$.
Finally observe that each $H$ has tree-depth at most $k+1$, and thus we have $f_k(H)\leq (2k)^{k+1}$ by Theorem~\ref{thm:tree-depth}.
Hence $|\cF| \leq \binom{b_{k+1}}{k+1} (2k)^{k+1} $, and we can take $c_k= \binom{b_{k+1}}{k+1} (2k)^{k+1}$.

\section{Conclusions}\label{sec:Conclusions}

In this paper we introduce the $k$-strong induced arboricity $f_k(G)$ of a graph $G$ to be the smallest number of $k$-strong forests covering the $k$-valid edges of $G$, where a forest is $k$-strong if it is induced and all its components have size at least $k$, and an edge is $k$-valid if it belongs to some $k$-strong forest.
Recall that for $k \in \bN$, we call a class $\cC$ of graphs \emph{$f_k$-bounded}, if there is a constant $c=c(\cC,k)$ such that $f_k(G)\leq c$ for each $G\in \cC$, and that $\cC$ is \emph{$f$-bounded} if $\cC$ is $f_k$-bounded for each $k \in \bN$.

We show that this new graph parameter $f_k$ is non-monotone as a function of $k$ and, for any $k\geq 2$, as a function of $G$ using induced subgraph partial ordering. 
Indeed, there exist classes of graphs $\cC_k$ and $\cC'_k$, $k \geq 2$, such that $\cC_k$ is $f_k$-bounded but not $f_{k+1}$-bounded, while $\cC'_k$ is $f_{k+1}$-bounded but not $f_k$-bounded.
Nevertheless, $f_k$ is bounded for graph classes of bounded expansion, which in particular includes proper minor-closed families.
Our main result is that every class $\cC$ of bounded expansion is $f$-bounded.
This implies, in particular, that the adjacent closed vertex-distinguishing number for graphs from such classes is bounded by a constant, which includes for example all planar graphs.
Additionally,
we find upper and lower bounds on $f_1(G)$, the induced arboricity, and study the relation between $f_1$ and the well-known notions of arboricity and acyclic chromatic number.

It remains open to improve the lower and upper bounds on $f_k$ for a given graph class.
For example, for planar graphs the maximum value for $f_1$ is between $6$ (as certified by $K_4$) and $10$ (following from $f_1(G) \leq \binom{X_{\rm acyc}(G)}{2}$ and Borodin's result on the acyclic chromatic number of planar graphs~\cite{Bor79}).
For graphs $G$ of tree-width~$t$, we provide explicit universal upper bounds on $f_1(G)$ and $f_2(G)$ in Theorem~\ref{thm:tw3}.
For $k \geq 3$, Theorem~\ref{thm:main} states the existence of a constant upper bound.
Using $f_k(G) \leq \binom{\chi_{k+1}(G)}{k+1}(2k)^{k+1}$, provided $\chi_{k+1}(G) \geq k+1$, from the proof of Theorem~\ref{thm:main} and $\chi_{p}(G) \leq t^p+1$ for $G$ of tree-width~$t$~\cite{NO06}, we conclude that $f_k(G) \leq \binom{t^{k+1}+1}{k+1}(2k)^{k+1}$ for any integer $k$ and any graph $G$ of tree-width~$t \geq 2$.
This upper bound is most likely far from the actual value, and improving its order of magnitude seems to be an interesting challenge.

\paragraph{Variations of $k$-strong induced arboricity:}
Recall that in the definition of $f_k$ 
edges that are not $k$-valid need not be (in fact, can not be) covered at all.
In particular, when no edge in $G$ is $k$-valid, then $f_k(G) = 0$.
By handling edges that are not $k$-valid differently, one obtains alternative definitions of our new arboricity parameter, which we discuss here briefly.

\medskip

Firstly, one might want to insist on covering \emph{all} edges (not just all $k$-valid edges) of $G$ with $k$-strong forests and define the corresponding parameter $f'_k(G)$.
We would have $f'_k(G) = \infty$ whenever not all edges of $G$ are $k$-valid.
On the positive side, we see that $f'_{k+1}(G) \geq f'_k(G)$ for every graph $G$ and every $k \geq 1$, because every $(k+1)$-strong forest is also a $k$-strong forest.
But $f'_k$ is again not monotone using induced subgraph partial ordering, since $f'_k(K_2) = \infty$ for every $k \geq 2$.
This also shows that no graph class $\cC$ that contains $K_2$ is $f'_k$-bounded for $k \geq 2$.
Even worse, we have $f'_k(G) = \infty$ for every $k > |E(G)|$ and hence no graph class is $f'$-bounded.
However, for every graph $G$ we have $f'_1(G) = f_1(G)$ (since every edge is $1$-valid) and hence a class $\cC$ is $f'_1$-bounded if and only if $\cC$ is $f_1$-bounded.

\medskip

Another natural variation of the $k$-strong induced arboricity would be the following: 
For a set $S \subseteq \bN$ of natural numbers and a graph $G$ define $f_S(G)$ to be the minimum number of induced forests in $G$ such that for all $k \in S$ each $k$-valid edge in $G$ lies in a component of size at least $k$ of some of the forests.
Clearly, we have $f_k(G) = f_{\{k\}}(G)$ and for $T \subset S$ we have $f_T(G) \leq f_S(G)$.
In particular considering $S = [k] = \{1,\ldots,k\}$ gives a parameter $f_{[k]}(G)$ similar to the $k$-strong induced arboricity in which however every edge of $G$ has to be covered.
As before, we say that a graph class $\cC$ is $f_S$-bounded if there is a constant $c_S = c(\cC,S)$ such that $f_S(G) \leq c_S$ for all $G \in \cC$.
It follows from our results, that for any \emph{finite} set $S \subset \bN$ any class $\cC$ of bounded expansion is $f_S$-bounded, because $f_S(G) \leq \sum_{k \in S} f_k(G)$ for every set $S$ and graph $G$.
On the other hand, the examples in Theorem~\ref{thm:monotonicity} show that the class of tree-width~$2$ graphs is not $f_\bN$-bounded, even the class of tree-depth~$3$ graphs, and the graphs of maximum degree at most~$4$.
(Note that all these classes are of bounded expansion, c.f.\ Figure~\ref{fig:graph-class-properties}.)
It would however be interesting to identify non-trivial graph classes (maybe grids?) that are $f_\bN$-bounded.
For example, one can show that $f_\bN(G) \leq 5$, whenever $G$ is a wheel graph, c.f.\ Figure~\ref{fig:small-examples}.
Moreover, in Theorem~\ref{thm:monotonicity}\ref{enumi:f_k-notNowhereDense} we present a graph class $\cC$ that is not nowhere dense, for which $f_\bN(G) \leq 2$ for each $G \in \cC$.





\subsection*{Acknowledgments}

We would like to thank the anonymous reviewers for their careful reading and helpful comments.

\bibliographystyle{plain}
\bibliography{lit}

\end{document}